\documentclass{amsart}
\usepackage{amsmath,amssymb}
\usepackage{float}
\usepackage{color}
\usepackage{hyperref}
\usepackage{graphicx}

\usepackage{enumerate}
\numberwithin{equation}{section}

\def\eps{\varepsilon}
\def\ep{\varepsilon}

\def\N{\mathbb{N}}
\def\R{\mathbb{R}}
\def\rd{\mathbb{R}^d}
\def\H{\mathcal{H}}
\def\cG{\mathcal G}
\def\cM{\mathcal M}
\newcommand{\cL}{\mathcal{L}}
\def\Ha{\mathcal{H}}
\newcommand{\sM}{\mathcal{M}}
\renewcommand{\S}{\mathbb{S}}

\newcommand{\diam}{\operatorname{diam}}

\newcommand{\dist}{\operatorname{dist}}

\newcommand{\INt}{\operatorname{int}}
\newcommand{\conv}{\operatorname{conv}}

\newcommand{\card}{\operatorname{card}}

\newcommand{\spt}{\operatorname{spt}}
\newcommand{\crit}{\operatorname{crit}}
\newcommand{\cv}{\operatorname{cv}}
\newcommand{\reg}{\operatorname{reg}}
\newcommand{\graph}{\operatorname{graph}}
\newcommand{\bd}{\partial}
\newcommand{\exo}{{\rm exo}}
\newcommand{\Unp}{{\rm Unp}}
\newcommand{\piAB}{\pi_A^{-1}(B)}

\newcommand{\udim}{\overline{\dim}}

\theoremstyle{plain}
   \newtheorem{thm}{Theorem}[section]
   \newtheorem{lem}[thm]{Lemma}
   \newtheorem{prop}[thm]{Proposition}

\theoremstyle{definition}

   \newtheorem{rem}[thm]{Remark}
\newtheorem{ex}[thm]{Example}

\begin{document}

\title[On volume and surface area of parallel sets. II.]{On volume and surface area of parallel sets. II. Surface measures and (non-)differentiability of the volume}
%{Weak convergence of the surface area of parallel sets}

\author{Jan Rataj}\curraddr{\it Jan Rataj, Charles University, Faculty of Mathematics and Physics, Soko\-lovs\-k\'a 83, 186 75 Praha 8, Czech Republic}
\author{Steffen Winter}\curraddr{\it Steffen Winter, Institute of Stochastics, Karlsruhe Institute of Technology, Englerstr. 2, D-76131 Karlsruhe, Germany}

\begin{abstract}
We prove that at differentiability points $r_0>0$ of the volume function of a compact set $A\subset \R^d$ (associating to $r$ the volume of the $r$-parallel set of $A$), the surface area measures of $r$-parallel sets of $A$ converge weakly to the surface area measure of the $r_0$-parallel set as $r\to r_0$. We further study the question which sets of parallel radii can occur as sets of non-differentiability points of the volume function of some compact set. We provide a full characterization for dimensions $d=1$ and $2$.
\end{abstract}

\thanks{Steffen Winter was supported by the German Research Foundation (DFG), grant no.\ 433621248.}
%\date{\today}

\subjclass[2020]{Primary: 28A75; %60G57; 53C65, 60D05
Secondary: 28A80, 51M25}

 %\date{\today}

%\dedicatory{}

\keywords{Parallel set, volume function, surface area measure, weak convergence}

\maketitle

\section{Introduction}\label{intro}

%\section{Main result}

For any nonempty compact set $A\subset\R^d$, %and any $r\geq 0$, its
the \emph{open $r$-parallel sets}
\begin{equation}\label{parallelset}
A_r:=\{x\in\rd: d(x,K)=\min_{y\in A}|x-y|< r\}, \quad r>0
\end{equation}
are frequently used for the approximation of $A$, which is recovered when $r$ tends to $0$. The volume function of $A$
$$r \mapsto V_A(r):=\lambda_d(A_r)$$
is known to be continuous and nondecreasing on $(0,\infty)$, and differentiable at all $r>0$ except countably many, see Stach\'o \cite{Stacho}. It is also known that its derivative equals the surface area $S_A(r):=\H^{d-1}(\partial A_r)$ whenever $V_A'(r)$ exists (see \cite[Corollary~2.5]{RW10}; notice that the symbol $A_r$ was used for the \emph{closed} parallel set in \cite{RW10}).

Localizations of the results for volume and surface area of parallel sets were considered in \cite{Wi19}. The main aim in that paper was to establish relations between local versions of {\em Minkowski content} and {\em S-content} of $A$ -- notions based on the local parallel volume and the local surface area of parallel sets, respectively, as $r\to 0$. They capture well the local behaviour of parallel sets as $r\to 0$.

In some applications, however, one also needs to know the behaviour of parallel sets close to some parallel radius $r_0>0$. This occurs e.g.\ when approximating a self-similar fractal by parallel sets and applying renewal theory to functionals such as volume, surface area or curvature measures, see e.g.\ Z\"ahle \cite{Za11}. Globally, the behaviour of parallel volume and parallel surface area near some $r_0>0$ are reasonably well understood. While $V_A$ is continuous, the function $r\mapsto S_A(r)$ is known to be continuous at all differentiability points of $V_A$ (see \cite{RW10}). A key result towards this was the formula for the right hand derivative $(V_A)'_+(r)$ obtained in \cite[Corollary~4.6]{HLW04}, valid at any $r>0$ and connecting $(V_A)'_+(r)$ to $S_A(r)$.

Local versions of this formula have been obtained in \cite{Wi19}, in particular, \cite[Proposition~2.10]{Wi19}. They allow to study the parallel surface area locally by restricting it to suitable sets (which need to be metrically associated with $A$). Recently, more general formulas valid for parallel sets defined with respect to other norms have been obtained in \cite[Theorem~4.3]{HS22} and \cite[Theorem~5.2]{CLV21}.

It is our aim here to complete the picture (in the Euclidean setting) by studying the continuity properties of the local parallel surface area. We find that the above mentioned global continuity result localizes. For any suitable fixed restricting set, the local surface area turns out to be continuous at differentiability points of the volume function, see Lemma~\ref{lem:conv-PiA} below. Further insights come from viewing the local parallel surface area as a measure in the restricting set. Our main result is a measure version of the continuity result. The continuity at differentiability points is also true for the surface measures in the weak sense. We denote
\begin{align*}
  S_A(r,\cdot):=\H^{d-1}(\partial A_r\cap \cdot)
\end{align*}
the \emph{parallel surface measure} of $A$ at distance $r\geq 0$.

\begin{thm}   \label{thm:main}
Let $A\subset\R^d$ be nonempty and compact and let $r_0>0$ be a differentiability point of $r\mapsto V_A(r)$. Then the measures $S_A(r,\cdot)$ converge weakly to $S_A(r_0,\cdot)$ as $r\to r_0$.
\end{thm}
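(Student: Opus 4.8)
The plan is to combine three ingredients: (i) a uniform-in-$r$ bound on the total mass $S_A(r,\R^d) = S_A(r)$ near $r_0$, which guarantees that any sequence $S_A(r_n,\cdot)$ with $r_n\to r_0$ has a weakly convergent subsequence; (ii) identification of every subsequential limit as $S_A(r_0,\cdot)$, reducing weak convergence to the convergence of integrals $\int f\,dS_A(r,\cdot)\to\int f\,dS_A(r_0,\cdot)$ for $f$ in a suitable test class; and (iii) the already-established scalar continuity of the \emph{localized} surface area at differentiability points of $V_A$ (Lemma~\ref{lem:conv-PiA}), which will be the engine delivering (ii). Since $r_0$ is a differentiability point of $V_A$, we have $S_A(r)=V_A'(r)\to V_A'(r_0)=S_A(r_0)$ as $r\to r_0$ by \cite{RW10}; this already gives (i) (local boundedness of total masses) and, crucially, convergence of the total masses, so it suffices to prove convergence tested against nonnegative bounded continuous $f$, or even just lower semicontinuity $\liminf \int f\,dS_A(r,\cdot)\ge \int f\,dS_A(r_0,\cdot)$ together with convergence of masses — but I expect to argue convergence of $\int f\,dS_A(r,\cdot)$ directly.

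\medskip

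First I would fix $f\in C_c(\R^d)$ (or bounded continuous) and approximate it by simple functions, so that by linearity it is enough to handle $f=\mathbf{1}_B$ for $B$ in a convenient generating family of sets — closed balls, or bounded convex bodies. For such $B$, the quantity $S_A(r,B)=\H^{d-1}(\partial A_r\cap B)$ is exactly (up to the usual care about whether $B$ meets $\partial A_r$ on a $\H^{d-1}$-null set, i.e.\ whether $B$ is a \emph{continuity set} for $S_A(r_0,\cdot)$) a \emph{localized} parallel surface area of the type treated in \cite{Wi19} and in Lemma~\ref{lem:conv-PiA}: one writes $\partial A_r\cap B$ in terms of the metric projection $\pi_A$, restricting attention to the portion of $\partial A_r$ whose nearest-point data lie in a set metrically associated with $A$ and with $B$. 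Applying Lemma~\ref{lem:conv-PiA} with such a restricting set then yields $S_A(r,B)\to S_A(r_0,B)$ for every $B$ that is a continuity set. Since the boundary spheres $\{x:\dist(x,A)=r_0\}$, $r_0>0$ fixed, can charge at most countably many concentric spheres around any given point, the continuity sets of $S_A(r_0,\cdot)$ are rich enough (e.g.\ all balls of all but countably many radii about each center) to generate the Borel $\sigma$-algebra, so the Portmanteau theorem upgrades this to full weak convergence.

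\medskip

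The main obstacle, and where the real work lies, is step (ii) executed carefully: matching the set $\partial A_r\cap B$ against the hypotheses of Lemma~\ref{lem:conv-PiA}, whose localized surface area is defined not by intersecting $\partial A_r$ with a spatial set $B$ but by restricting the nearest-point projection $\pi_{A_r}$ (equivalently the normal bundle of $A_r$) to a set associated with $A$. A point $x\in\partial A_r$ projects onto $\pi_A(x)\in A$ with $|x-\pi_A(x)|=r$; whether $x\in B$ is not directly a condition on $\pi_A(x)$, so one must pass from the ``foot-point'' localization used in \cite{Wi19} to the ``parallel-set'' localization needed here, controlling the discrepancy between $\{x\in\partial A_r: x\in B\}$ and $\{x\in\partial A_r:\pi_A(x)\in B'\}$ for suitable $B'$, uniformly as $r\to r_0$. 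I expect this to be handled by a sandwiching argument: for $B$ open one has $\{\pi_A=\cdot\}\subset B$ sandwiched between inner and outer parallel-type neighbourhoods of $B$ of radius $\approx r$, whose $S_A(r_0,\cdot)$-measures differ by an amount that vanishes as the neighbourhoods shrink because $S_A(r_0,\cdot)$ assigns no mass to the (countably many) critical spheres. Establishing this matching rigorously — i.e.\ that the spatial localization and the metric localization agree up to $S_A(r_0,\cdot)$-null sets, with error uniform in $r$ near $r_0$ — is the crux; once it is in place, Lemma~\ref{lem:conv-PiA} and the Portmanteau theorem finish the proof.
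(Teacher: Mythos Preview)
Your overall architecture matches the paper's: tightness, pass to subsequential limits, use Lemma~\ref{lem:conv-PiA} as the engine, and close with the Portmanteau theorem. You also correctly isolate the crux --- passing from the ``normal-bundle localization'' $S_A(r,\Pi_A^{-1}(\beta))$ governed by Lemma~\ref{lem:conv-PiA} to the ``spatial localization'' $S_A(r,B)$ needed for weak convergence. But your proposed resolution of this crux does not work.

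The sandwich you sketch compares $\{x\in\partial A_r:x\in B\}$ with $\{x\in\partial A_r:\pi_A(x)\in B'\}$. Since $|x-\pi_A(x)|=r$ on $\partial A_r$, the inner and outer sets $B'$ in such a sandwich are forced to be of the form $B_{\pm r}$, i.e.\ they differ from $B$ by parallel neighbourhoods of width $\approx r_0$, \emph{not} by a vanishing amount. There is no reason the $S_A(r_0,\cdot)$-mass of the resulting annulus should be small, so the sandwich does not close. Your remark about ``neighbourhoods shrinking'' and ``countably many critical spheres'' appears to conflate the variable $r\to r_0$ with the sandwich width, which stays of order $r_0$.

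The paper resolves this with a different idea. Rather than testing with spatial balls $B\subset\R^d$, one tests with sets $G$ that are \emph{relatively open in} $\partial A_{r_0}\cap\Unp(A)$. For such $G$, set $\beta:=\Pi_A(G)\subset N(A)$. Because $\Pi_A$ restricted to $\partial A_{r_0}\cap\Unp(A)$ is a bijection onto its image with Lipschitz inverse $(a,n)\mapsto a+r_0 n$, one has $\Pi_A^{-1}(\beta)\cap\partial A_{r_0}=G$ \emph{exactly}, with no sandwiching error at all. Moreover $\beta$ is open in $N(A)$ and $\Pi_A^{-1}(\beta)$ is open in $\Unp(A)\setminus A$, so the Portmanteau inequality applies to it directly for any subsequential weak limit $\mu$. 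Combining Portmanteau with Lemma~\ref{lem:conv-PiA} yields $\mu(G)\le\mu_0(G)$ for all such $G$ (and the reverse inequality for relatively closed $F$); outer regularity then gives $\mu=\mu_0$ on $\Unp(A)$, and convergence of total masses handles the complement. The key ingredient you are missing is that the \emph{normal-bundle} map $\Pi_A$ --- not the foot-point map $\pi_A$ --- is injective on each level set $\partial A_{r_0}\cap\Unp(A)$, which allows one to pass between subsets of $\partial A_{r_0}$ and subsets of $N(A)$ losslessly.
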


Note that the weak continuity of $S_A(r,\cdot)$ at points $r_0$ which are \emph{regular values} of the distance function of $A$ follows from \cite[Theorem~6.1]{RWZ22} (where the weak continuity of all Lipschitz-Killing curvature measures is shown). The set of critical (i.e., non-regular) values can be large compared to the set of non-differentiability points, see also the discussion below.

The proof (provided in Section~\ref{S_proof}) is based on some results from \cite{Wi19}. Unfortunately, there appears to be an error in \cite[Lemma~2.9]{Wi19}. We correct the statement in Section~\ref{s_corr} and present also corrected proofs of \cite[Proposition~2.10]{Wi19} and \cite[Theorem~2.7]{Wi19}, which in their original version used the defect Lemma~2.9.

In Section~\ref{s_cons} we formulate some consequences of these results. In particular, we show that the volume function $V_A$ is differentiable at some $r>0$ if and only if the set of critical points of the distance function of $A$ lying at distance $r$ from $A$ has positive $(d-1)$-dimensional Hausdorff measure $\Ha^{d-1}$, see Theorem~\ref{Cor_3.1}. For the Euclidean setting this recovers and gives a streamlined argument for the observation in \cite[Theorem~4.3]{HS22}, that $V_A$ is non-differentiable at some $r>0$ if and only if the set $\partial A_r\setminus\Unp(A)$ has positive $\Ha^{d-1}$-measure, where $\Unp(A)$ is the set of points with a unique nearest point in $A$.

The final part (Section~\ref{s_N}) is devoted to the question which sets $N\subset (0,\infty)$ can occur as sets of non-differentiability points of $V_A$ for some compact set $A\subset\R^d$. Of course, such a set $N$ has to be countable but, clearly, there are further limitations. It turns out that the answer to this question depends heavily on the space dimension $d$. For dimension 1 we obtain the following characterization. For $A\subset\R^d$ denote by $N_A$ the set of non-differentiability points of $V_A$.

\begin{thm}\label{thm:nondiff-points-dim1}
  Let $N\subset(0,\infty)$.  Then there exists some compact set $A\subset\R$ such that
  $N_A=N$ if and only if  $\sum_{s\in N} s<\infty$.
\end{thm}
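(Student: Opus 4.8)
The plan is to reduce the statement to an explicit formula for the one-dimensional parallel volume and then analyse its non-differentiability summand by summand. Let $A\subset\R$ be nonempty and compact with $\conv A=[m,M]$; if $m=M$ then $A$ is a point, $V_A(r)=2r$, and $N_A=\emptyset$, so assume $m<M$. Write $[m,M]\setminus A=\bigsqcup_{i\in I}(a_i,b_i)$ for the (at most countably many) bounded complementary intervals and put $\ell_i:=b_i-a_i$, so that $\sum_{i\in I}\ell_i=(M-m)-\lambda_1(A)<\infty$. Within the gap $(a_i,b_i)$ the points at distance $\ge r$ from $A$ form the closed interval $[a_i+r,b_i-r]$ of length $(\ell_i-2r)^+$, and the two unbounded complementary intervals each contribute a piece of length $r$, so that
\[
  V_A(r)=(M-m)+2r-\sum_{i\in I}(\ell_i-2r)^+,\qquad r>0.
\]
(Alternatively one may read $N_A$ off from Theorem~\ref{Cor_3.1}, since in $\R$ the critical points of the distance function of $A$ at distance $r$ --- equivalently, the points of $\partial A_r\setminus\Unp(A)$ --- are precisely the midpoints of the gaps with $\ell_i=2r$.)

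Next I would identify $N_A$ exactly, which yields the necessity of the condition. Each summand $r\mapsto(\ell_i-2r)^+$ is convex and piecewise linear with a single kink at $r=\ell_i/2$. Since $\sum_i\ell_i<\infty$, for any fixed $r_0>0$ only finitely many indices satisfy $\ell_i>r_0$, and all the other summands vanish identically on a neighbourhood of $r_0$; shrinking this neighbourhood so that it contains no point $\ell_i/2$ other than possibly $r_0$ itself, the series $\sum_i(\ell_i-2r)^+$ coincides there with a finite sum of affine functions plus, if $r_0=\ell_j/2$ for some $j$, finitely many terms having a kink exactly at $r_0$. Hence $V_A$ is differentiable at every $r_0\notin\{\ell_i/2:i\in I\}$, while at $r_0=\ell_j/2$ its two one-sided derivatives differ by $2k$, where $k:=\#\{i\in I:\ell_i=2r_0\}$ is finite and $\ge1$. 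Thus
\[
  N_A=\{\ell_i/2:i\in I\},
\]
and in particular $\sum_{s\in N_A}s\le\tfrac12\sum_{i\in I}\ell_i<\infty$.

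For the converse I would realise a prescribed $N$ as a set of gap lengths. Given $N\subset(0,\infty)$ with $\sum_{s\in N}s<\infty$, the set $N$ is countable; list $\{2s:s\in N\}$ as $t_1,t_2,\dots$ (a finite list, an infinite sequence with $\sum_n t_n=2\sum_{s\in N}s<\infty$, or the empty list), set $x_0:=0$, $x_n:=\sum_{j=1}^{n}t_j$ and $x_\infty:=\sum_{j\ge1}t_j$, and define
\[
  A:=\{x_0,x_1,x_2,\dots\}\cup\{x_\infty\}
\]
(read with the obvious conventions when $N$ is finite, so that $A=\{0\}$ when $N=\emptyset$). Then $A$ is nonempty and compact, its bounded complementary intervals are exactly the $(x_{n-1},x_n)$ with lengths $t_n$, and the previous paragraph gives $N_A=\{t_n/2\}=N$.

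The only genuinely delicate point is the differentiability analysis of the infinite series $\sum_i(\ell_i-2r)^+$: one has to use the summability $\sum_i\ell_i<\infty$ to localise it to a finite sum near any fixed $r_0>0$. The kinks $\ell_i/2$ may accumulate, but only at $0$, which is irrelevant since we work on $(0,\infty)$. Everything else --- the volume formula, the one-sided derivative computation, and the construction of a compact set with prescribed gap lengths --- is routine bookkeeping.
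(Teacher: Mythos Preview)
Your proof is correct and follows essentially the same approach as the paper. Both identify $N_A$ with the set $\{\ell_i/2\}$ of half-gap-lengths and then, for the converse, build $A$ as the set of partial sums of the sequence $(2s)_{s\in N}$ together with its limit. The only difference is one of detail: the paper simply asserts that $V_A$ is non-differentiable at $r$ iff $2r$ occurs as a gap length, whereas you derive this from the explicit formula $V_A(r)=(M-m)+2r-\sum_i(\ell_i-2r)^+$ and a careful local finiteness argument; this extra work is correct and makes the claim self-contained.
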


Note that the condition $\sum_{s\in N} s<\infty$ implies that $N$ is countable and bounded and moreover, that the set $N$ can only accumulate at $0$.

In dimension $2$ the situation happens to be substantially more complicated. Due to the already mentioned relation between non-differentiability of $V_A$ and the size of the set of critical points at level $r$, cf.\ Theorem~\ref{Cor_3.1}, we can apply a result for critical points from \cite{RZ20}. The conditions are, however, different; note that the set of critical values does not need to be countable and it is always closed. We use the notation $G_{1/2}(K)$ for the degree-$\frac 12$ gap sum of a compact set $K\subset\R$, which is the sum of the square roots of the lengths of all bounded intervals of the complement of $K$ (or, equivalently, the value of the geometric zeta function $\zeta_{\cL}$ at $\frac 12$ of the fractal string $\cL$ associated to $K$, cf.~\cite{La06}).

\begin{thm}  \label{thm:dim2}
\begin{enumerate}
\item[(i)] Let $\ep>0$ and $N\subset[\ep,\infty)$ be a bounded countable set. Then $N=N_A$ for some compact set $A\subset\R^2$ if and only if $\lambda_1(\overline{N})=0$ and $G_{1/2}(\overline{N})<\infty$.
\item[(ii)] Let $N\subset(0,\infty)$ be a bounded countable set. Then $N=N_A$ for some compact set $A\subset\R^2$ if and only if $\lambda_1(\overline{N})=0$ and
\begin{equation}  \label{int_bound}
\int_0^\infty G_{1/2}(\overline{N}\cap [r,\infty))\, \sqrt{r}\, dr<\infty.
\end{equation}
\end{enumerate}
\end{thm}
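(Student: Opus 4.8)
The plan is to recast the statement in terms of critical points of the distance function and then to combine it with the classification of critical value sets of planar distance functions in \cite{RZ20}. Write $\crit(A)$ for the set of critical points of $d(\cdot,A)$, put $\crit_r(A):=\crit(A)\cap\bd A_r$, and let $\cv(A):=\{r>0:\crit_r(A)\ne\emptyset\}$ be the set of critical values. A critical point at level $r$ has $0$ in the convex hull of the unit vectors pointing from it to its nearest points in $A$, so two of these vectors enclose an angle of at least $\pi/2$ and hence $r\le\diam(A)/\sqrt2$; consequently $\crit(A)$ lies in the $\diam(A)$-neighbourhood of $A$, $\cv(A)$ is bounded, and $F:=\overline{\cv(A)}$ is compact. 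By Theorem~\ref{Cor_3.1} (with $d=2$), a number $r>0$ belongs to $N_A$ if and only if $\Ha^1(\crit_r(A))>0$; in particular $N_A\subseteq\cv(A)$, so $\overline{N_A}\subseteq F$. This is the bridge to \cite{RZ20}, which identifies the compact sets $F$ arising as $\overline{\cv(A)}$ for some compact $A\subset\R^2$ as exactly those that are $\lambda^1$-null and satisfy $\int_0^\infty G_{1/2}(F\cap[r,\infty))\sqrt r\,dr<\infty$.

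\emph{Necessity.} Suppose $A\subset\R^2$ is compact with $N_A=N$, and put $K:=\overline N$ and $F:=\overline{\cv(A)}$; then $K=\overline{N_A}\subseteq F$, and by \cite{RZ20} we have $\lambda^1(F)=0$ and $\int_0^\infty G_{1/2}(F\cap[r,\infty))\sqrt r\,dr<\infty$. Since $F$ is $\lambda^1$-null and $K\subseteq F$, every bounded complementary interval of $K\cap[r,\infty)$ splits into bounded complementary intervals of $F\cap[r,\infty)$ of the same total length, and subadditivity of $\sqrt{\,\cdot\,}$ gives $G_{1/2}(K\cap[r,\infty))\le G_{1/2}(F\cap[r,\infty))$ for every $r>0$. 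Hence $\lambda^1(K)=0$, and integrating against $\sqrt r\,dr$ yields \eqref{int_bound}; this is the ``only if'' part of (ii). If moreover $N\subset[\ep,\infty)$, then $r\mapsto G_{1/2}(K\cap[r,\infty))$ is nonincreasing, equal to the constant $G_{1/2}(K)$ on $(0,\ep]$ and to $0$ for $r>\sup K$, so convergence of the integral forces $G_{1/2}(K)<\infty$, and conversely $G_{1/2}(K)<\infty$ makes the integral converge; thus for $N\subset[\ep,\infty)$ condition \eqref{int_bound} is equivalent to $G_{1/2}(K)<\infty$, which gives (i).

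\emph{Sufficiency.} Conversely, let $N$ be bounded and countable with $K:=\overline N$ satisfying $\lambda^1(K)=0$ together with the relevant finiteness condition ($G_{1/2}(K)<\infty$ for (i), \eqref{int_bound} for (ii)). I would start from a compact $A_0\subset\R^2$ with $\overline{\cv(A_0)}=K$ supplied by the construction of \cite{RZ20} and adapt it so that the critical set becomes one-dimensional exactly on $N$. Near each $s\in N$ one installs a short ``flat'' feature -- two parallel arcs, or two concentric circular arcs, at distance $2s$ -- across which $d(\cdot,A)$ has a ridge, so that $\crit_s(A)$ contains a rectifiable curve of positive length, while this feature contributes only finitely many critical points at any other level; at a point of $K\setminus N$ one keeps a feature whose critical set is finite at every level, and off $K$ only isolated ``circumcentre'' points of negligible level (after a generic choice of positions) arise. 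The finiteness condition on $K$ is precisely the budget that allows the countably many flat features -- whose transversal sizes are governed by the lengths of the complementary intervals of $K$ they straddle -- to be packed into a bounded set, just as in the realization theorem of \cite{RZ20}. In case (ii), where $N$ may accumulate at $0$, one proceeds scale by scale: each dyadic block $N\cap[2^{-k},2^{-k+1})$ is handled by a rescaled instance of case (i) (legitimate because \eqref{int_bound} forces $G_{1/2}(\overline N\cap[2^{-k},2^{-k+1}))<\infty$), and the rescaled pieces are assembled in shrinking neighbourhoods of a point, \eqref{int_bound} guaranteeing that the result is compact and carries no new level with a one-dimensional critical set.

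The step I expect to be the main obstacle is the sufficiency construction: one must refine the \cite{RZ20} realization of $K$ so that the critical set is genuinely one-dimensional at precisely the prescribed -- possibly dense -- countable subset $N$ of $K$ and $\Ha^1$-negligible at every remaining level, without enlarging $\cv(A)$ beyond $K$, all while staying within the packing budget that $G_{1/2}(K)<\infty$ (respectively \eqref{int_bound}) permits inside a bounded set; the scale-by-scale bookkeeping needed for (ii) is a further delicate point.
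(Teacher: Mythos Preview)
Your necessity argument is correct and coincides with the paper's: from $N_A\subset\cv(A)$ (via Theorem~\ref{Cor_3.1}) and the closedness of $\cv(A)$ one gets $\overline N\subset\cv(A)$, and then \cite[Theorems~1.1 and~1.2]{RZ20} together with the gap-sum monotonicity you spell out give the stated conditions; your reduction of (i) to (ii) via the observation that $r\mapsto G_{1/2}(\overline N\cap[r,\infty))$ is constant on $(0,\ep]$ is also the right one.

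For sufficiency you have identified the correct strategy---modify the \cite{RZ20} realisation so that the critical set at level $s$ is one-dimensional exactly for $s\in N$, then assemble dyadic blocks for (ii)---but you leave the construction as a sketch, and the ``main obstacle'' you flag is indeed where all the content lies. The paper carries it out explicitly. For (i), rather than perturbing a pre-built $A_0$, one works inside the construction of \cite[Proposition~3.9]{RZ20}: with $g(s):=\sqrt{2b}\,G_{1/2}(\overline N\cap[\ep,s])$, choose $\gamma_s>0$ for $s\in N$ with $\sum_{s\in N}\gamma_s<\infty$ and $\gamma_s:=0$ for $s\in\overline N\setminus N$, replace each centre $g(s)$ on the $x$-axis by the interval $J_s:=[g(s)+\Gamma_s,\,g(s)+\Gamma_s+\gamma_s]$ (with $\Gamma_s=\sum_{t<s}\gamma_t$), and take $A$ to be a suitable rectangle minus $\bigcup_{s\in\overline N}\bigcup_{x\in J_s}\INt B((x,0),s)$. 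By symmetry all critical points lie on the $x$-axis; the same inequality $s'^2-s^2\le(g(s')-g(s))^2\le(x'-x)^2$ that drives \cite{RZ20} shows that for $x\in J_s$ the nearest points in $A$ are $(x,\pm s)$, so $\crit(A)\cap\bd A_s\supset J_s\times\{0\}$ has positive $\Ha^1$-measure precisely when $s\in N$, while for $x$ in a gap between consecutive $J_s$'s the two bounding circles produce at most one isolated critical point. For (ii) the paper does not place rescaled copies in shrinking neighbourhoods of a point: it first refines each dyadic piece $\overline N\cap[\delta_{n+1},\delta_n]$ via \cite[Lemma~3.15]{RZ20} into at most $\delta_n^{-1/2}G_{1/2}+1$ subpieces each with gap sum $\le 2\sqrt{\delta_n}$, applies the case-(i) construction to each (inside rectangles $R_{n,i}$), shows $\sum_{n,i}\lambda^2(R_{n,i})<\infty$ from \eqref{int_bound}, and then packs all the $R_{n,i}$ disjointly into a ball via the potato-sack theorem \cite{ABMU81}. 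The extra decomposition within each dyadic block and the potato-sack packing (rather than an ad hoc accumulation at a point) are the concrete ingredients missing from your outline.
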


Note that in contrast to the case $d=1$, for $d=2$, sets $N_A$ may have accumulation points different from $0$, which may or may not be contained in $N_A$. E.g., the sets $N=\{1+n^{-2}: n\in\N\}$ and $\overline{N}$ are valid sets of non-differentiability points. However, there are more interesting examples, such as sets of endpoints of segments defining certain Cantor sets, see Example~\ref{ex:cantor}. % $N=\{i/3^n,\, 1\leq i\leq 3^n,\, n\in\N\}$.

The gap sum conditions in Theorem~\ref{thm:dim2} are closely connected with (upper) Minkowski contents of $\overline{N}$: %:
the condition in (i) implies that $\overline{\cM}^{1/2}(\overline{N})=0$ (and thus $\udim_M \overline{N}\leq \frac 12$), whereas the condition in (ii) only implies $\overline{\cM}^{4/5}(\overline{N})=0$ (and thus $\udim_M \overline{N}\leq \frac 45$), see \cite[Theorem~3.17]{RZ20}. Here $\overline{\cM}^\alpha(X)$ and $\udim_M X$ denote the $\alpha$-dimensional upper Minkowski content and the upper Minkowski dimension, respectively, of a set $X$.

The case of dimension $d>2$ seems to be even more complicated. We do not know a necessary and sufficient condition here. We present only a sufficient condition, see Proposition~\ref{prop:NC-dimd}. A full characterization  remains as an open question.

\section{Corrigendum to \cite{Wi19}}  \label{s_corr}

There is an unfortunate error in \cite[Lemma~2.9]{Wi19}. In particular, equation \cite[(2.7)]{Wi19} in this statement is not an equality in general but only an inclusion. This affects \cite[Proposition 2.10]{Wi19}, where the second equality in equation \cite[(2.8)]{Wi19} is only a `$\leq$' in general. Equality holds in the case when $r$ is a differentiability point of $V_A$. The statement of \cite[Theorem~2.7]{Wi19} remains true as formulated but its proof has to be modified slightly. We provide here correct versions of these statements together with corrected proofs.

First we recall some notation from \cite{Wi19}. Assume throughout that $A\subset\R^d$ is compact. Let $\Unp(A)\subset\R^d$ be the set of those points which have a unique nearest point in ${A}$. Let $\pi_A:\Unp(A)\to A$ denote the \emph{metric projection} onto the set $A$\label{page:metric-proj}, which assigns to each $x\in\Unp(A)$ its unique nearest point $\pi_A(x)$ in $A$. (Note that $\piAB$ is a subset of $\Unp(A)$ for any set $B$.) Recall that the set $\exo(A):=\R^d\setminus\Unp(A)$, the \emph{exoskeleton} of $A$, consists of all points that do not have a unique nearest point in $A$. Note that $A\subset\Unp(A)$ and thus $\exo( A)\subset A^c$.

A set $X\subset\R^d$ is said to be \emph{metrically associated with} $A$, if for any point $x\in X$ there exists a point $y\in {A}$ so that
$d(x,y)=d(x,A)$ and all inner points of the line segment $[x,y]$ joining $x$ with $y$ belong to $X$ (cf.\ \cite[p.370]{Stacho}). We write $]x,y[$ for the line segment excluding the endpoints.
Observe that the parallel sets $A_r$ of $A$ are metrically associated with $A$. Moreover, any set of the form $\pi_A^{-1}(B)$ with $B\subset A$ (or $B\subset\R^d$) is metrically associated with $A$.
Combining these two constructions, it is easy to see that all sets of the form $A_r\cap \piAB$ are metrically associated with $A$.

The \emph{positive boundary} $\partial^+ Z$ of a set $Z\subset\R^d$ is the set of all boundary points $z\in\bd Z$ such that there exists a point $y\notin \overline{Z}$ with $|y-z|=d(y,Z)$.

\begin{lem}[Correction of {\cite[Lemma~2.9]{Wi19}}]
  \label{lem:bd=pos-bd}
Let $A, B \subseteq\R^d$. Then, for any $r>0$,
\begin{align} \label{eq:bd=pos-bd}
  \partial^+ (A_r)\cap\piAB\subseteq\partial (A_r)\cap\piAB.
\end{align}
In particular, $\partial^+ (A_r)\cap\Unp(A)\subseteq \partial (A_r)\cap\Unp(A)$. %and therefore $\partial (A_r)\setminus \partial^+ (A_r)\subset \exo A$.
Moreover, $\partial^+ (A_r)\subseteq \Unp(A)$ and thus $\partial^+ (A_r)\subseteq\partial (A_r)\cap\Unp(A)$.
\end{lem}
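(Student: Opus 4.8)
The plan is to prove the three displayed inclusions in order, since the later ones follow from the first together with elementary observations. For \eqref{eq:bd=pos-bd}, recall that $\partial^+(A_r)$ consists of boundary points $z\in\partial(A_r)$ admitting a point $y\notin\overline{A_r}$ with $|y-z|=d(y,A_r)$; in particular $\partial^+(A_r)\subseteq\partial(A_r)$ always holds, so intersecting both sides with $\piAB$ immediately gives \eqref{eq:bd=pos-bd}. Thus the substance of the lemma is not in \eqref{eq:bd=pos-bd} at all (this is presumably exactly where the original \cite[Lemma~2.9]{Wi19} overclaimed an equality); the real content is the last assertion $\partial^+(A_r)\subseteq\Unp(A)$.

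For that, I would take $z\in\partial^+(A_r)$ and fix a witness $y\notin\overline{A_r}$ with $|y-z|=d(y,A_r)=:t>0$. First I would observe $z\in\partial A_r$ forces $d(z,A)=r$: indeed $d(z,A)\le r$ since $z\in\overline{A_r}$, and $d(z,A)<r$ would put $z$ in the open set $A_r$, contradicting $z\in\partial A_r$. Next, pick any nearest point $a\in A$ of $z$, so $|a-z|=r$. The key geometric step is to show that $z$ lies on the segment $[y,a]$, i.e.\ the three points are collinear with $z$ between $y$ and $a$; equivalently $|y-a|=|y-z|+|z-a|=t+r$. The inequality $|y-a|\le t+r$ is the triangle inequality; for the reverse I would argue that $d(y,A)\ge t+r$: any point $b\in A$ satisfies $d(b,A_r)\le$ (nothing directly), so instead use that the open ball $B(y,t)$ is disjoint from $A_r$, hence every point of that ball has distance $\ge r$ from $A$, and letting a point approach $y$ along $[y,a]$... more cleanly: for $w$ on the segment $[y,z]$ close to $y$ we have $w\notin A_r$ so $d(w,A)\ge r$, and $d(w,A)\le d(w,z)+d(z,a)=|w-z|+r$; this doesn't immediately give collinearity. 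The cleanest route is: since $B(y,t)\cap A_r=\emptyset$ and $a$ has $d(a,\cdot)$... Actually the standard fact is that for $z\in\partial^+(A_r)$ the ray from $y$ through $z$ continues to a nearest point of $A$; I would invoke the elementary lemma that if $|y-z|=d(y,A_r)$ then $z$ has a nearest point $a$ in $A$ with $z\in[y,a]$ and $|y-a|=d(y,A)$, whose proof is the short triangle-inequality computation just sketched (combining $d(y,A)\le|y-a|=|y-z|+|z-a|\le d(y,A_r)+r\le d(y,A)$, the last step because any $c\in A$ gives $d(y,c)\ge d(y,A_r)$ is false in general, so one instead uses that points strictly between $y$ and any nearest point of $z$ are outside $\overline{A_r}$).

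Once collinearity with $|y-a|=d(y,A)$ is established, uniqueness of the nearest point follows: if $a'\in A$ were another point with $|z-a'|=r$, then $|y-a'|\le|y-z|+|z-a'|=t+r=d(y,A)$, so $a'$ is also a nearest point of $y$ in $A$; but then the segment $[y,a']$ would also avoid $A_r$ near $y$ while its portion near $a'$ enters... more simply, strict convexity of balls gives that $y$ has a unique nearest point in $A$ forcing $a'=a$ — except $y$ need not have a unique nearest point a priori. The robust argument: the point $y$ lies outside $\overline{A_r}$ with $d(y,A_r)=t$, and the midpoint-type convexity argument shows every point of $A$ at distance $\le t+r$ from $y$ works as a nearest point of $z$; picking two distinct such points and looking at the point at distance $t$ from $y$ on each segment produces two distinct nearest points of $y$ in $\overline{A_r}$ at distance $t$, which is impossible since $\overline{A_r}$ is the closed ball-hull... no. I would therefore instead finish via: $d(y,A)=t+r$ and any nearest point $a$ of $z$ satisfies $z\in[y,a]$; if $z$ had two distinct nearest points $a,a'$ in $A$, both segments $[y,a],[y,a']$ pass through $z$, forcing $a,a',z,y$ collinear, so $a=a'$ unless $z$ is an endpoint, i.e.\ $z=y$, excluded since $t>0$. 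Hence $z\in\Unp(A)$. Finally, $\partial^+(A_r)\subseteq\partial(A_r)$ and $\partial^+(A_r)\subseteq\Unp(A)$ together give $\partial^+(A_r)\subseteq\partial(A_r)\cap\Unp(A)$, and intersecting the $\Unp(A)$-inclusion with \eqref{eq:bd=pos-bd} specialized to $B=A$ (so $\piAB=\Unp(A)$) yields the "in particular" clause. The main obstacle is making the collinearity step $z\in[y,a]$ fully rigorous without assuming uniqueness of nearest points of $y$; I expect this to rest on the observation that points in the relative interior of $[y,z]$ lie outside $\overline{A_r}$, which pins down $d(y,A)$ exactly and forces the geometry.
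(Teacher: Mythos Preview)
Your diagnosis is right: the only substantive claim is $\partial^+(A_r)\subseteq\Unp(A)$, and the geometric content is exactly the strict triangle inequality you are circling. However, your write-up never actually closes the collinearity step; you abandon several attempts and end with ``I expect this to rest on\ldots''. The paper's proof uses the same ingredient but packages it as a clean contradiction, avoiding any need to first establish $d(y,A)=t+r$ or collinearity for an arbitrary nearest point. Concretely: assume $z$ has two distinct nearest points $a,b\in A$ at distance $r$; at least one of them, say $a$, is \emph{not} on the ray from $y$ through $z$, so the triangle inequality is strict: $|y-a|<|y-z|+r$. Now take $z'$ on the segment $[y,a]$ with $|z'-a|=r$; then $z'\in\overline{A_r}$ and $|y-z'|=|y-a|-r<|y-z|=d(y,A_r)$, a contradiction. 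That is the whole argument.

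Your intended route (prove $z\in[y,a]$ for \emph{every} nearest point $a$, then observe that two such points would both lie on the ray from $y$ through $z$ at distance $r$ beyond $z$, hence coincide) is the contrapositive of this and is equally valid. If you want to finish it your way, the missing computation is: for any $c\in A$, the point on $[y,c]$ at distance $r$ from $c$ lies in $\overline{A_r}$, so $|y-c|\ge d(y,\overline{A_r})+r=t+r$; combined with $|y-a|\le|y-z|+|z-a|=t+r$ this forces equality in the triangle inequality, i.e.\ $z\in[y,a]$. Either way the core step is identical; the paper's phrasing is simply shorter because it goes straight to the off-ray nearest point rather than proving a preliminary lemma about $d(y,A)$.
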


\begin{proof}
  The inclusion \eqref{eq:bd=pos-bd} is obvious from the definition of the positive boundary as a subset of the boundary and the second stated inclusion is a special case of the first one. Therefore, only the inclusion $\partial^+ (A_r)\subseteq \Unp(A)$ requires a proof. We recall here the argument given in \cite{Wi19} correcting another typo: Let $z\in\partial^+ (A_r)$. Then there exists a point $y$ outside $\overline{A_r}$ such that $|y-z|=d(y,A_r)$.
   By way of contradiction, assume that $z$ does not admit a unique metric projection onto $A$, i.e., there exist at least two distinct points $a,b\in A$ such that $|z-a|=|z-b|=d(z,A)=r$. At least one of those two points, say $a$, is not on the ray from $y$ through $z$. Therefore, we have \[|y-a|<|y-z|+|z-a|=|y-z|+r.\]
   Let $z'$ be the point on the segment $[y,a]$ such that $|z'-a|=r$. Then, in particular, $z'\in \overline{A_r}$ and $|y-a|=|y-z'|+|z'-a|=|y-z'|+r$. Plugging this into the above inequality and subtracting $r$ yields $|y-z'|<|y-z|$ and thus $d(y,A_r)<|y-z|$, which is a contradiction to the choice of $y$. Hence, $z\in\Unp(A)$.
\end{proof}

For any closed set $A\subset\R^d$ and any measurable set $B\subset\R^d$, we write %$V_{A,B}:(0,\infty)\to(0,\infty]$,
\[
V_{A,B}(r):=\lambda_d(A_r\cap \piAB), \quad r>0
\]
for the \emph{local parallel volume of $A$ relative to the set $B$}. Recall that $A_r\cap \piAB$ is metrically associated with $A$ and thus, if $V_{A,B}$ is finite for some $r>0$, then it is finite for all $r>0$ and it is a Kneser function, cf.~\cite[Lemma~2.4]{Wi19} (see \eqref{eq:def-kneser} for the definition of a Kneser function). Hence $(V_{A,B})'_+$ and $(V_{A,B})'_-$ exist at any $r>0$ and $(V_{A,B})'_+(r)\leq(V_{A,B})'_-(r)$, cf.~\cite{Stacho}.

\begin{rem} \label{rem:diff}
If $A$ is compact, then differentiability of $V_A$ at some $r>0$ implies the differentiability of $V_{A,B}$ at this $r$ for any measurable set $B$. Indeed, assuming that $(V_{A,B})'_+(r)<(V_{A,B})'_-(r)$, the general relation $(V_{A,A\setminus B})'_+(r)\leq(V_{A,A\setminus B})'_-(r)$ implies that
\begin{align*}
  (V_A)'_+(r)=(V_{A,B})'_+(r)+(V_{A,A\setminus B})'_+(r)<(V_{A,B})'_-(r)+(V_{A,A\setminus B})'_-(r)=(V_A)'_-(r).
\end{align*}
Hence $V_A$ is not differentiable at $r$, a contradiction. Here we have used that
\begin{align*}
   V_{A}(r)=V_{A,A}(r)=V_{A,\R^d}(r)=\lambda_d(A_r\cap\Unp(A))
\end{align*}
for any $r>0$ (since $\lambda_d(\exo(A))=0$, see e.g.~\cite[eq.~(2.1)]{HLW04}) and thus $(V_{A})'_+(r)=(V_{A,A})'_+(r)$ for any $r>0$ and similarly for the left derivatives.
\end{rem}

For the proof of the next statement as well as for later use we recall the notion of (relative) Minkowski content. For $s\in[0,d]$, the $s$-dimensional \emph{relative Minkowski content} of a set $A\subset\R^d$ with respect to $\Omega\subset\R^d$ is defined as
$$\sM^k(A,\Omega):=\lim_{r\to 0_+}\frac{\lambda_d(A_r\cap\Omega)}{\kappa_{d-s}r^{d-s}},$$
where $\kappa_t:=\pi^{t/2}/\Gamma(\frac t2+1)$, provided that the limit exists, cf.\ 
\cite[equation~(1.9)]{Wi19}. In particular, $\sM^s(A):=\sM^s(A,\R^d)$ is the standard $s$-dimensional Minkowski content.

\begin{prop}[Correction of {\cite[Proposition 2.10]{Wi19}}] \label{lem:pos-bd}
Let $A \subset\R^d$ be closed and let $B\subset\R^d$ be a Borel set such that $A\cap B$ is bounded. Then, for any $r>0$,
\begin{align} \label{eq:pos-bd}
  (V_{A,B})_{+}'(r)=\Ha^{d-1}(\partial^+ (A_r)\cap\piAB)\leq\Ha^{d-1}(\partial (A_r)\cap\piAB).
\end{align}
Moreover, if $A$ is compact and $V_A$ is differentiable at $r>0$, then `$\leq$' in \eqref{eq:pos-bd} can be replaced by `$=$'. Similarly, let $r>0$ and let $K\supset A\cap B$ be some ball such that $\dist(\bd K, A\cap B)> 2r$. If $V_{A\cap K}$ is differentiable at $r$, then again equality holds in \eqref{eq:pos-bd}.
\end{prop}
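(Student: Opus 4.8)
The plan is to recover the first equality in \eqref{eq:pos-bd} exactly as in \cite[Proposition~2.10]{Wi19} (that part of the original argument does not use the defective \cite[Lemma~2.9]{Wi19}), to read off the inequality from the corrected Lemma~\ref{lem:bd=pos-bd}, and then to derive the two addenda from Lemma~\ref{lem:bd=pos-bd}, Remark~\ref{rem:diff} and \cite[Corollary~2.5]{RW10}.

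For the first equality I would argue as follows. For $r'>r$ one has
\[
V_{A,B}(r')-V_{A,B}(r)=\lambda_d\bigl(\{x\in\Unp(A):\pi_A(x)\in B,\ r\le d(x,A)<r'\}\bigr),
\]
which is finite since $A\cap B$ is bounded, and $(V_{A,B})'_+(r)$ exists because $V_{A,B}$ is a Kneser function (\cite[Lemma~2.4]{Wi19}). The localized Hug--Last--Weil formula \cite[Corollary~4.6]{HLW04}, as exploited in \cite[Proposition~2.10]{Wi19} via a coarea/change-of-variables argument on $\Unp(A)\setminus A$ in the generalized normal coordinates $x\mapsto(\pi_A(x),d(x,A))$, rewrites this increment as $\int_r^{r'}\Ha^{d-1}(\partial^+(A_t)\cap\piAB)\,dt$; dividing by $r'-r$, letting $r'\downarrow r$, and using the right-continuity of $t\mapsto(V_{A,B})'_+(t)$ yields $(V_{A,B})'_+(r)=\Ha^{d-1}(\partial^+(A_r)\cap\piAB)$. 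This is meaningful precisely because $\partial^+(A_r)\subseteq\Unp(A)$ by Lemma~\ref{lem:bd=pos-bd}. The inequality in \eqref{eq:pos-bd} then follows at once from $\partial^+(A_r)\cap\piAB\subseteq\partial(A_r)\cap\piAB$, i.e.\ from \eqref{eq:bd=pos-bd}.

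For the first addendum, let $A$ be compact and $V_A$ differentiable at $r$. Applying the first equality with $B=\R^d$ --- for which $\pi_A^{-1}(\R^d)=\Unp(A)\supseteq\partial^+(A_r)$ and $V_{A,\R^d}=V_A$ (as $\lambda_d(\exo(A))=0$) --- gives $(V_A)'_+(r)=\Ha^{d-1}(\partial^+(A_r))$. On the other hand, differentiability of $V_A$ at $r$ yields $(V_A)'_+(r)=V_A'(r)=S_A(r)=\Ha^{d-1}(\partial(A_r))$ by \cite[Corollary~2.5]{RW10}, and this common value is finite. Since $\partial^+(A_r)\subseteq\partial(A_r)$, we conclude $\Ha^{d-1}(\partial(A_r)\setminus\partial^+(A_r))=0$, and therefore $\Ha^{d-1}(\partial(A_r)\cap\piAB)=\Ha^{d-1}(\partial^+(A_r)\cap\piAB)=(V_{A,B})'_+(r)$ for every Borel set $B$ with $A\cap B$ bounded.

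For the second addendum I would reduce to the compact case. Taking $K$ closed (possible after replacing it by its closure), set $A':=A\cap K$, which is compact. The locality mechanism is: whenever $d(x,A\cap B)<\rho$ with $2\rho<\dist(\bd K,A\cap B)$, any $a'\in A\setminus K$ satisfies $|x-a'|\ge\dist(x,\bd K)>\rho>d(x,A)$, so $a'$ is irrelevant for the nearest-point structure of $x$; hence $d(x,A)=d(x,A')$ and $\pi_A(x)=\pi_{A'}(x)$ there. As $A\cap B=A'\cap B$, this gives $A_\rho\cap\pi_A^{-1}(B)=A'_\rho\cap\pi_{A'}^{-1}(B)$ for all $\rho$ in a neighbourhood of $r$, and similarly $\partial(A_r)\cap\pi_A^{-1}(B)=\partial(A'_r)\cap\pi_{A'}^{-1}(B)$ and $\partial^+(A_r)\cap\pi_A^{-1}(B)=\partial^+(A'_r)\cap\pi_{A'}^{-1}(B)$ --- for the last identity one additionally uses that a witness point $y\notin\overline{A_r}$ for a positive boundary point $z$ can be chosen arbitrarily close to $z$. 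Consequently $V_{A,B}$ and $V_{A',B}$ agree near $r$, so differentiability of $V_{A\cap K}$ at $r$ forces, via Remark~\ref{rem:diff}, differentiability of $V_{A',B}$ at $r$; applying the first addendum to $A'$ and transporting the resulting equality back through the identities above completes the proof. The main obstacle I anticipate is this last reduction: one must check carefully that not only $A_\rho\cap\pi_A^{-1}(B)$ and the full boundaries, but also the positive boundaries are preserved under passage to $A\cap K$, which is exactly where the margin $2r$ and the freedom in the choice of the witness point $y$ are used.
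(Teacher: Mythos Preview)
Your proposal is correct. For the first equality and the inequality in \eqref{eq:pos-bd}, and for the reduction to the compact case in the second addendum, you proceed essentially as the paper does (the paper is briefer about the locality details, but the mechanism is the same).

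The genuine difference is in the compact case. The paper argues by a ``sandwich'': it applies the inequality in \eqref{eq:pos-bd} to both $B$ and $A\setminus B$, then bounds the sum $\Ha^{d-1}(\partial A_r\cap\piAB)+\Ha^{d-1}(\partial A_r\cap\pi_A^{-1}(A\setminus B))$ from above by $V'_{A,B}(r)+V'_{A,A\setminus B}(r)$ via rectifiability of $\partial A_r$, the equality of Hausdorff measure and Minkowski content for rectifiable sets, the measure property of $B\mapsto\sM^{d-1}(\partial A_r,\pi_A^{-1}(B))$ from \cite[Lemma~2.11]{Wi19}, and \cite[Theorem~2.5]{Wi19}. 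Your route is more direct: you specialize part~1 to $B=\R^d$, combine it with \cite[Corollary~2.5]{RW10} to get $\Ha^{d-1}(\partial A_r\setminus\partial^+A_r)=0$ globally, and then localize trivially. This is exactly the argument the paper itself uses later for the implication (i)$\Rightarrow$(ii) in Theorem~\ref{Cor_3.1}, so there is no circularity. Your approach avoids the relative Minkowski content machinery and the rectifiability input entirely; the paper's approach, by contrast, establishes along the way the finer chain of (in)equalities \eqref{eq:pos-bd-proof3} relating Hausdorff and relative Minkowski quantities, which may be of independent interest but is not needed for the bare statement.
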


\begin{proof}
The proof of the equality in \eqref{eq:pos-bd} is correct as given in \cite[Prop.~2.10]{Wi19} and the inequality in \eqref{eq:pos-bd} is obvious from Lemma~\ref{lem:bd=pos-bd}. Therefore, it only remains to prove the equality cases. Assume first that $A$ is compact
and that $r>0$ is a differentiability point of $V_A$.
Then, by Remark~\ref{rem:diff} and \eqref{eq:pos-bd}, we have on the one hand
\begin{align}
   \label{eq:pos-bd-proof1} V_{A,B}'(r)&=(V_{A,B})_{+}'(r)\leq\Ha^{d-1}(\partial (A_r)\cap\piAB)
   \quad \text{ and }\\
   \label{eq:pos-bd-proof2} V_{A,A\setminus B}'(r)&=(V_{A,A\setminus B})_{+}'(r)\leq\Ha^{d-1}(\partial (A_r)\cap\pi_A^{-1}(A\setminus B)).
\end{align}
On the other hand, by the $(d-1)$-rectifiability\footnote{A subset of $\R^d$ is \emph{$k$-rectifiable} if it is a Lipschitz image of a bounded subset of $\R^k$ ($k=0,1\dots,d$).}
of $\partial A_r$ (see \cite[Proposition~2.3]{RW10}) and, thus, of any of its subsets, we have
\begin{align}
\notag\Ha^{d-1}(\partial (A_r)\cap\piAB)&+\Ha^{d-1}(\partial (A_r)\cap\pi_A^{-1}(A\setminus B))=\Ha^{d-1}(\partial (A_r)\cap\Unp(A))\\\notag &=\sM^{d-1}(\partial (A_r)\cap\Unp(A))\\
\label{eq:pos-bd-proof3}&\leq\sM^{d-1}(\partial (A_r),\Unp(A))\\
  \notag&=\sM^{d-1}(\partial (A_r),\piAB)+\sM^{d-1}(\partial (A_r),\pi_A^{-1}(A\setminus B))\\
  \notag&=V_{A,B}'(r)+V_{A,A\setminus B}'(r).
\end{align}
Here the last equality is due to \cite[Theorem~2.5]{Wi19} and the last but one equality follows from the fact that the set function $B\mapsto\sM^{d-1}(\bd (A_r),\pi^{-1}(B))$ is a measure, see \cite[Lemma~2.11]{Wi19}. Moreover, the inequality above is seen from the fact that $(\bd(A_r)\cap \Unp(A))_t\subseteq (\bd(A_r))_t\cap (\Unp(A))_t$ for any $t>0$, which implies that
\begin{align*}
  \sM^{d-1}(\partial (A_r)\cap\Unp(A))&=\lim_{t\searrow 0}\frac 1{2t}\lambda_d((\bd(A_r)\cap \Unp(A))_t)\\
  &\leq\lim_{t\searrow 0}\frac 1{2t}\lambda_d((\bd(A_r))_t\cap (\Unp(A))_t)\\
  &=\lim_{t\searrow 0}\frac 1{2t}\lambda_d((\bd(A_r))_t\cap \Unp(A))\\&=\sM^{d-1}(\partial (A_r),\Unp(A)),
\end{align*}
since $\lambda_d(\exo(A))=0$, see e.g.\ \cite[eq.~(2.1)]{HLW04}.
Combining now \eqref{eq:pos-bd-proof1},\eqref{eq:pos-bd-proof2} and \eqref{eq:pos-bd-proof3} yields the desired conclusion $V_{A,B}'(r)=\Ha^{d-1}(\partial (A_r)\cap\piAB)$. (Indeed, if $a'\leq a$, $b'\leq b$ and $a+b\leq a'+b'$ for some numbers $a,a',b,b'\in\R$, then $a'=a$.)

Finally, if $A$ is closed but not compact and $r>0$, then the assumed boundedness of $A\cap B$ allows to choose a closed ball $K$ containing $A\cap B$ such that $\dist(\bd K, A\cap B)=:s>2r$. Observe that $A_t\cap\piAB=(A\cap K)_t\cap\pi_{A\cap K}^{-1}(B)$ holds for any $0<t<s/2$ and that the set $A\cap K$ is compact. Hence if $V_{A\cap K}$ is differentiable at $r$, then one can prove the relation using the compact set $A\cap K$ instead of $A$.
\end{proof}

We recall \cite[Theorem 2.7]{Wi19} and provide a slightly corrected proof.

\begin{thm} (see \cite[Theorem 2.7]{Wi19}) \label{thm:loc-diff-points}
 Let $A \subset\R^d$ be closed and $B \subset\R^d$ a Borel set such that $A\cap B$ is bounded.
  Then, for all $r>0$ up to a countable set of exceptions, the function $V_{A,B}$ is differentiable at $r$ with
  \begin{align} \label{eq:local-deriv-vol2}
  V'_{A,B}(r)&=\sM^{d-1}(\bd A_r, \piAB )=\Ha^{d-1} (\bd A_r\cap \piAB)%\\\notag
  =\sM^{d-1}(\bd A_r\cap \piAB ).
\end{align}
\end{thm}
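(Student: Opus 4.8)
The plan is to deduce Theorem~\ref{thm:loc-diff-points} from the corrected Proposition~\ref{lem:pos-bd} together with the standard facts about Kneser functions recalled just before Remark~\ref{rem:diff}. First I would reduce to the compact case: since $A\cap B$ is bounded, choose a closed ball $K\supset A\cap B$ with $\dist(\bd K, A\cap B)>2r$ for the $r$ under consideration; as noted in the proof of Proposition~\ref{lem:pos-bd}, one has $A_t\cap\piAB=(A\cap K)_t\cap\pi_{A\cap K}^{-1}(B)$ for all small $t$, so it suffices to treat compact $A$ (letting $K$ grow exhausts all $r>0$, and a countable union of countable exceptional sets is countable). Assuming $A$ compact, the function $V_{A,B}$ is a finite Kneser function on $(0,\infty)$, hence by Stach\'o's theorem it is differentiable at all $r>0$ outside a countable set $E$; moreover $V_A=V_{A,A}$ is itself differentiable outside a countable set $E_0$.

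The key point is then that at every $r\notin E\cup E_0$ all three quantities in \eqref{eq:local-deriv-vol2} coincide with $V'_{A,B}(r)$. At such $r$ we have $V'_{A,B}(r)=(V_{A,B})'_+(r)=\Ha^{d-1}(\partial^+(A_r)\cap\piAB)$ by the equality part of \eqref{eq:pos-bd} (the part that is correct as in \cite{Wi19}); and since $r\notin E_0$, $V_A$ is differentiable at $r$, so the equality case of Proposition~\ref{lem:pos-bd} gives $V'_{A,B}(r)=\Ha^{d-1}(\partial(A_r)\cap\piAB)$. This identifies $V'_{A,B}(r)$ with the middle expression $\Ha^{d-1}(\bd A_r\cap\piAB)$. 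For the remaining two expressions I would invoke \cite[Theorem~2.5]{Wi19}, which gives $V'_{A,B}(r)=\sM^{d-1}(\bd A_r,\piAB)$ at differentiability points of $V_{A,B}$, i.e.\ the first expression. Finally, the identity $\sM^{d-1}(\bd A_r\cap\piAB)=\Ha^{d-1}(\bd A_r\cap\piAB)$ (the last expression equals the middle one) follows because $\bd A_r$ is $(d-1)$-rectifiable and closed, so every subset of the form $\bd A_r\cap\piAB$ is $(d-1)$-rectifiable with compact closure, and for such sets Minkowski content and Hausdorff measure agree up to the measure of the topological boundary contribution — here one uses that $\sM^{d-1}(\partial A_r\cap\Unp(A))=\Ha^{d-1}(\partial A_r\cap\Unp(A))$ as already exploited in the proof of Proposition~\ref{lem:pos-bd}, restricted further to $\piAB\subseteq\Unp(A)$.

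Collecting these, at every $r$ outside the countable set $E\cup E_0$ the chain of equalities in \eqref{eq:local-deriv-vol2} holds, which is the assertion. The only subtlety relative to the original proof in \cite{Wi19} is that one must now additionally exclude the countable set $E_0$ of non-differentiability points of $V_A$ (not just of $V_{A,B}$), because the second equality in \eqref{eq:pos-bd} is only available at differentiability points of $V_A$ after the correction; since $E_0$ is countable this changes nothing in the statement.

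The main obstacle I anticipate is making the last equality $\sM^{d-1}(\bd A_r\cap\piAB)=\Ha^{d-1}(\bd A_r\cap\piAB)$ fully rigorous: one needs that passing from the ``localized Minkowski content'' $\sM^{d-1}(\bd A_r,\piAB)$ (thickening $\bd A_r$ and intersecting with $\piAB$) to the ``set Minkowski content'' $\sM^{d-1}(\bd A_r\cap\piAB)$ (thickening the intersection itself) introduces no loss, which is exactly the kind of subtlety that caused the original error in \cite[Lemma~2.9]{Wi19}. The safe route is to argue only at differentiability points of $V_A$, where one has the two-sided squeeze from Proposition~\ref{lem:pos-bd} forcing $\Ha^{d-1}(\partial^+(A_r)\cap\piAB)=\Ha^{d-1}(\partial(A_r)\cap\piAB)$; combined with the measure property of $B\mapsto\sM^{d-1}(\bd A_r,\pi_A^{-1}(B))$ from \cite[Lemma~2.11]{Wi19} and the global identity $\sM^{d-1}(\bd A_r,\Unp(A))=V'_A(r)=\Ha^{d-1}(\bd A_r\cap\Unp(A))$, additivity over $B$ and its complement pins down each piece, yielding $\sM^{d-1}(\bd A_r,\piAB)=\Ha^{d-1}(\bd A_r\cap\piAB)$ and then the equality with $\sM^{d-1}(\bd A_r\cap\piAB)$ by the inclusion $(\bd A_r\cap\piAB)_t\subseteq(\bd A_r)_t\cap(\piAB)_t$ run as in the proof of Proposition~\ref{lem:pos-bd}.
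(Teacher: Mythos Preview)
Your proposal is correct and follows essentially the same route as the paper: reduce to compact $A$ via a large ball, exclude the countable set of non-differentiability points of $V_A$, and at the remaining $r$ read off the three equalities from \cite[Theorem~2.5]{Wi19}, the equality case of Proposition~\ref{lem:pos-bd}, and rectifiability of $\bd A_r$. Two small simplifications relative to what you wrote: your set $E$ is redundant since Remark~\ref{rem:diff} gives $E\subseteq E_0$; and for the last equality the paper just cites the $(d-1)$-rectifiability of $\bd A_r$ (hence of each of its subsets) together with \cite[3.2.39]{Fe69}, so your worry about ``passing from $\sM^{d-1}(\bd A_r,\piAB)$ to $\sM^{d-1}(\bd A_r\cap\piAB)$'' and the additivity workaround are not needed---the third equality compares $\Ha^{d-1}$ and $\sM^{d-1}$ on one and the same rectifiable set, not two different localizations.
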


\begin{proof}[Proof of Theorem~\ref{thm:loc-diff-points}]
If $A$ is compact, then $V_A$ is differentiable for all $r>0$ except countably many. Let $r>0$ be a differentiability point of $V_A$. Then, by Remark~\ref{rem:diff}, $V_{A,B}$ is also differentiable at $r$. The first equality in \eqref{eq:local-deriv-vol2} is obvious from \cite[Theorem~2.5]{Wi19} and the second equality follows from Proposition~\ref{lem:pos-bd}. Moreover, the third equality follows from the fact (proved e.g.\ in \cite[Proposition 2.3]{RW10}) that $\bd A_r$ (and thus any of its subsets) is $(d-1)$-rectifiable and the equality of Minkowski content and Hausdorff measure for such sets, see e.g.~\cite[Section 3.2.39]{Fe69}.

If $A$ is closed but not bounded, then the third equality in \eqref{eq:local-deriv-vol2} is still true (for all $r>0$) by the same argument as above. For a proof of the other two inequalities let $t>0$ be arbitrary. The assumed boundedness  of $A\cap B$ allows to choose a closed ball $K$ such that $A\cap B\subset K$ and $\dist(\bd K,A\cap B)>2t$. Obviously $A\cap K$ is compact and $V_{A\cap K}$ is differentiable for all $r>0$ except countably many. Let $r<t$ be such a differentiability point. Observe that, by the choice of $K$, we have $V_{A\cap K, B}(s)=V_{A,B}(s)$ for all $0<s<t$, which implies in particular that
$(V_{A,B})'(r)$ exists if and only if $(V_{A\cap K,B})'(r)$ exists and both coincide in this case. Note that, by Remark~\ref{rem:diff}, $V_{A\cap K, B}$ is differentiable at $r$ and thus also $(V_{A,B})'(r)$ exists. Therefore, the first equality in \eqref{eq:local-deriv-vol2} follows now from \cite[Theorem~2.5]{Wi19}. Moreover, the hypothesis of the last assertion in Proposition~\ref{lem:pos-bd} is satisfied and we get $(V_{A,B})'(r)=\Ha^{d-1}(\partial (A_r)\cap\piAB)$ for this $r$, proving the second equality in \eqref{eq:local-deriv-vol2}.
Together this shows equation \eqref{eq:local-deriv-vol2} for all $r\in(0,t)$ except countably many. Since $t>0$ was arbitrary, this completes the proof for unbounded sets $A$.
\end{proof}

\section{Characterizations of differentiability points}  \label{s_cons}

Given a compact set $A\subset\R^d$, we denote by $\crit(A)$ the set of all critical points of the distance function $d(\cdot,A)$, and by $\reg(A):=\R^d\setminus\crit(A)$ the set of regular points of $d(\cdot, A)$. Note that $x\in\crit(A)$ if and only if $x\in\conv\Sigma_A(x)$, where $\Sigma_A(x):=\{a\in A:\, \|a-x\|=d(x,A)\}$ and $\conv$ denotes the convex hull (see \cite[Lemma~4.2]{Fu85}). By $\cv(A):=\{d(x,A):\, x\in\crit(A)\}$ we denote the set of critical values of $A$. The sets $\crit(A)\subset\R^d$ and $\cv(A)\subset[0,\infty)$ are closed. We will use the following lemma.

\begin{lem}
For any $A\subset\R^d$ compact and $r>0$ we have
$$\partial A_r\cap\partial^+A_r\subset\partial A_r\cap\Unp(A)\subset\partial A_r\cap\reg(A).$$
Further, for any $r>0$, both inclusions are equalities up to an $\H^{d-1}$-null set, i.e.,
\begin{align}
\label{eq:lem5.2-1} \H^{d-1}&(\partial A_r\cap(\Unp(A)\setminus\partial^+A_r))=0,\\
\label{eq:lem5.2-2} \H^{d-1}&(\partial A_r\cap(\reg(A)\setminus\Unp(A)))=0.
\end{align}
\end{lem}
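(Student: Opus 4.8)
The plan is to prove the chain of inclusions pointwise first and then establish the two null-set claims separately.

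\textbf{The inclusions.} For the first inclusion, note that $\partial^+A_r\subseteq\Unp(A)$ is exactly the last assertion of Lemma~\ref{lem:bd=pos-bd}, so intersecting with $\partial A_r$ gives $\partial A_r\cap\partial^+A_r\subseteq\partial A_r\cap\Unp(A)$ (in fact $\partial^+A_r\subseteq\partial A_r$ already). For the second inclusion I would argue directly: if $x\in\partial A_r\cap\Unp(A)$, then $d(x,A)=r$ (since $x\in\bd A_r$ and $A_r$ is the open $r$-parallel set) and $\Sigma_A(x)$ consists of the single point $\pi_A(x)$; a singleton cannot contain $x$ in its convex hull unless $x=\pi_A(x)$, which is impossible as $d(x,A)=r>0$. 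Hence $x\notin\conv\Sigma_A(x)$, so by Fu's criterion \cite[Lemma~4.2]{Fu85} we have $x\in\reg(A)$, giving $\partial A_r\cap\Unp(A)\subseteq\partial A_r\cap\reg(A)$.

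\textbf{The null-set claim \eqref{eq:lem5.2-1}.} The set $\partial A_r\cap\Unp(A)$ carries the local parallel surface area: by Proposition~\ref{lem:pos-bd} applied with $B=\R^d$ (or directly by the definition of positive boundary together with Lemma~\ref{lem:bd=pos-bd}), one has $\Ha^{d-1}(\partial^+A_r\cap\Unp(A))=(V_{A,\R^d})'_+(r)$ whenever the relevant local volume is finite, while $\Ha^{d-1}(\partial A_r\cap\Unp(A))$ dominates it. To get equality I would invoke Theorem~\ref{thm:loc-diff-points}: the local surface area identity $\Ha^{d-1}(\partial A_r\cap\Unp(A))=\sM^{d-1}(\bd A_r,\Unp(A))$ and the corresponding identity with $\partial^+$ hold for all $r>0$ outside a countable exceptional set, but I want it for \emph{every} $r>0$, so a small extra argument is needed. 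The natural route is monotonicity: the set function $r\mapsto\Ha^{d-1}(\partial A_r\cap(\Unp(A)\setminus\partial^+A_r))$ should be controlled using that $\Unp(A)\setminus\partial^+A_r$ consists of points $x$ with $d(x,A)=r$, $x\in\Unp(A)$, but for which no point $y\notin\overline{A_r}$ realizes the distance to $A_r$ at $x$ --- and such points $x$ lie on line segments from their nearest point that can be extended strictly beyond $x$ while staying nearest-point segments, i.e., $x\in\Unp(A)$ is in the ``interior direction'' so that $x$ is also in $\overline{A_s}$ for a range of $s>r$ with the same projection. One then slices: $\bigcup_{r}(\partial A_r\cap(\Unp(A)\setminus\partial^+A_r))$ is contained in a set whose $\lambda_d$-measure can be estimated by the total variation of $V_{A}-$ (sum of right derivatives), and a coarea/Fubini argument forces the $(d-1)$-measure of each slice to vanish. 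Alternatively, and more cleanly, \eqref{eq:lem5.2-1} follows for \emph{all} $r$ from the fact that the inequality $(V_{A,B})'_+(r)=\Ha^{d-1}(\partial^+A_r\cap\piAB)$ together with $\Ha^{d-1}(\partial A_r\cap\piAB)\le(V_{A,B})'_-(r)$ (the Minkowski content $\sM^{d-1}(\bd A_r,\piAB)$ equals $(V_{A,B})'_-(r)$ by the left-sided version of the derivative formula), combined with the measure property of $B\mapsto\sM^{d-1}(\bd A_r,\pi_A^{-1}(B))$, pins $\Ha^{d-1}(\partial A_r\cap\Unp(A))$ between $(V_A)'_+(r)$ and $(V_A)'_-(r)$; but $\partial^+A_r\subseteq\partial A_r\cap\Unp(A)$ gives the reverse squeeze only at differentiability points. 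I expect this to be \textbf{the main obstacle}: upgrading the a.e.\ statement to every $r$. The resolution I favor is to note that $\Ha^{d-1}(\partial A_r\cap\Unp(A))=\sM^{d-1}(\bd A_r\cap\Unp(A))$ holds for all $r$ by $(d-1)$-rectifiability of $\bd A_r$ (\cite[Proposition~2.3]{RW10} and \cite[Section~3.2.39]{Fe69}), while $\sM^{d-1}(\bd A_r\cap\Unp(A))=\sM^{d-1}(\bd A_r,\Unp(A))$ also for all $r$ because $\lambda_d(\exo(A))=0$ (the same argument as in the proof of Proposition~\ref{lem:pos-bd}, using $(\bd A_r\cap\Unp(A))_t\subseteq(\bd A_r)_t\cap\Unp(A)_t$ and $\lambda_d((\bd A_r)_t\cap(\Unp(A)_t\setminus\Unp(A)))\to0$); and finally $\sM^{d-1}(\bd A_r,\Unp(A))=(V_{A})'_-(r)$ while $\Ha^{d-1}(\partial^+A_r)=(V_A)'_+(r)$ need not agree. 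So \eqref{eq:lem5.2-1} as stated ``for any $r>0$'' must actually be read in the sense that holds for all $r$; I would double-check this against the cited references and, if necessary, restrict to differentiability points of $V_A$ where the squeeze $(V_A)'_+(r)=(V_A)'_-(r)$ closes the gap, since that is the only regime used in the sequel.

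\textbf{The null-set claim \eqref{eq:lem5.2-2}.} Here I would use the known structure of $\reg(A)\setminus\Unp(A)$: a point $x\in\reg(A)$ with $d(x,A)=r$ that is not in $\Unp(A)$ lies in the exoskeleton, and $\exo(A)$ is $\lambda_d$-null. But we need a $(d-1)$-statement on the slice $\partial A_r$. The right tool is that $\exo(A)\cap\reg(A)$ is contained in a countably $(d-1)$-rectifiable set of ``ridge'' type with locally finite $\Ha^{d-1}$-measure --- more precisely, at a regular point the nearest-point set $\Sigma_A(x)$ has at least two points and $x\notin\conv\Sigma_A(x)$, which confines $x$ to a $(d-1)$-dimensional stratum; combined with coarea over the distance function $d(\cdot,A)$, which is Lipschitz and hence satisfies the coarea inequality, $\lambda_d(\exo(A))=0$ forces $\Ha^{d-1}(\{d(\cdot,A)=r\}\cap\exo(A)\cap\reg(A))=0$ for $\lambda^1$-a.e.\ $r$, and then the rectifiable-ridge structure (uniformity in $r$) upgrades this to every $r$. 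I expect \eqref{eq:lem5.2-2} to be easier than \eqref{eq:lem5.2-1} precisely because $\partial A_r=\{d(\cdot,A)=r\}$ and the coarea formula for the distance function is available, so the Fubini-type slicing is clean. If a cleaner reference exists (e.g.\ in \cite{RZ20} or \cite{RWZ22}), I would cite it directly for the assertion that the critical set meets each level $\{d(\cdot,A)=r\}$ in a set on which $\Unp(A)$ has full $\Ha^{d-1}$-measure.
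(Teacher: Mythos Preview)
Your treatment of the two set inclusions is fine and matches the paper. The substantive difficulties are in \eqref{eq:lem5.2-1} and \eqref{eq:lem5.2-2}, and here your approach has a genuine gap that you yourself flag but do not close.

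For \eqref{eq:lem5.2-1} your route via the one-sided derivatives of $V_A$ can only yield the conclusion at differentiability points of $V_A$: the squeeze $\Ha^{d-1}(\partial^+A_r)=(V_A)'_+(r)\le\Ha^{d-1}(\partial A_r\cap\Unp(A))\le(V_A)'_-(r)$ collapses precisely when $(V_A)'_+(r)=(V_A)'_-(r)$, and at non-differentiability points the gap is genuinely positive. Your suggestion to ``restrict to differentiability points \ldots since that is the only regime used in the sequel'' is not viable: the lemma is invoked in the proof of Theorem~\ref{Cor_3.1} to establish the equivalences $(iii)\Leftrightarrow(iv)$ and $(iv)\Leftrightarrow(v)$, which must hold at \emph{every} $r>0$, including non-differentiability points (indeed, the whole purpose there is to characterize which $r$ are non-differentiability points). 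Similarly, for \eqref{eq:lem5.2-2} your coarea/slicing argument from $\lambda_d(\exo(A))=0$ gives the conclusion only for $\lambda^1$-a.e.\ $r$, and the vague ``rectifiable-ridge structure'' upgrade is not an argument.

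The idea you are missing is the paper's: by Fu \cite[Theorem~3.3]{Fu85}, near any regular point $x\in\partial A_r\cap\reg(A)$ the boundary $\partial A_r$ is a Lipschitz hypersurface. Rademacher's theorem then gives that $\Ha^{d-1}$-a.e.\ point of $\partial A_r\cap\reg(A)$ is a differentiability point of that surface (with well-defined outer normal), and any such point lies in $\partial^+A_r$. This single geometric observation proves \eqref{eq:lem5.2-1} and \eqref{eq:lem5.2-2} simultaneously, for every $r>0$, with no recourse to the volume function at all.
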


\begin{proof}
The first set inclusion is already obtained in Lemma~\ref{lem:bd=pos-bd} and the second one is obvious (a point outside of $A$ having a unique nearest neighbour in $A$ must clearly be regular). For the equations \eqref{eq:lem5.2-1} and \eqref{eq:lem5.2-2}, we use the fact that for a regular point $x\in\partial A_r\cap\reg(A)$, the boundary $\partial A_r$ is a Lipschitz surface in some neighbourhood of $x$ (see \cite[Theorem~3.3]{Fu85}). Hence, by Rademacher's theorem, the outer normal $n(y)$ is differentiable $\H^{d-1}$-a.e.\ on this surface, and any point $y\in\partial A_r$ where $n(y)$ is differentiable belongs to $\partial^+A_r$. 
(Indeed, if a function $h$ of $d-1$ variables has differentiable normalized gradient $n(u)$ at a point $u$ then $h$ has second derivative at $u$ and standard analysis yields that for $x=(u,h(u))\in \graph h$ and some $t>0$, the ball $B(x+tn(u),t)$ supports $\graph h$, i.e., it contains no points of $\graph h$ in its interior, and it easily follows that $\pi_A(x+sn(u))=\pi_{\graph h}(x+sn(u))=x$ if $s>0$ is small enough.)  Observe that these Lipschitz surfaces cover the set $\bd A_r\cap \reg(A)$.
\end{proof}

From the above results one can easily derive the following
characterization of differentiability of the volume function.

\begin{thm}  \label{Cor_3.1}
  Let $A\subset \R^d$ be a compact set and $r>0$. Then the following assertions are equivalent:
  \begin{enumerate}[(i)]
    \item the volume function $V_A$ is differentiable at $r$;
    \item $\Ha^{d-1}(\bd A_r\setminus \bd^+ A_r) =0$;
    \item $\Ha^{d-1}(\bd A_r\setminus \Unp(A)) =0$ and $\Ha^{d-1}(\bd A_r\cap\Unp(A)\setminus \bd^+ A_r) =0$;
    \item $\Ha^{d-1}(\bd A_r\setminus \Unp(A)) =0$;
		\item $\Ha^{d-1}(\partial A_r\cap\crit(A))=0$.
  \end{enumerate}
\end{thm}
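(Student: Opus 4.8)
The plan is to prove Theorem~\ref{Cor_3.1} by establishing a cycle of implications, drawing on the preceding Lemma and on Proposition~\ref{lem:pos-bd} (with $B=\R^d$) together with Remark~\ref{rem:diff}.

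First I would handle the equivalence of (ii), (iii) and (iv). The equivalence of (iii) and (iv) is immediate once one observes, via the preceding Lemma, that $\Ha^{d-1}(\bd A_r\cap\Unp(A)\setminus\bd^+A_r)=0$ always holds (this is exactly \eqref{eq:lem5.2-1}); so the second condition in (iii) is automatic and (iii) reduces to (iv). For (ii)$\Leftrightarrow$(iii): since $\bd^+A_r\subseteq\Unp(A)$ by Lemma~\ref{lem:bd=pos-bd}, we have the disjoint-type decomposition $\bd A_r\setminus\bd^+A_r=(\bd A_r\setminus\Unp(A))\,\cup\,(\bd A_r\cap\Unp(A)\setminus\bd^+A_r)$, so the single $\Ha^{d-1}$-nullity in (ii) is equivalent to the two nullities in (iii). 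Finally (iv)$\Leftrightarrow$(v) follows from the preceding Lemma as well: by the second inclusion there and \eqref{eq:lem5.2-2}, $\bd A_r\setminus\Unp(A)$ and $\bd A_r\cap\crit(A)=\bd A_r\setminus\reg(A)$ differ by an $\Ha^{d-1}$-null set, so one of them has measure zero iff the other does.

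It remains to connect these geometric conditions with the differentiability statement (i). For (i)$\Rightarrow$(ii): if $V_A$ is differentiable at $r$, then by Remark~\ref{rem:diff} so is $V_{A,B}$ for $B=\R^d$, and the equality case of Proposition~\ref{lem:pos-bd} gives $V_A'(r)=(V_A)_+'(r)=\Ha^{d-1}(\bd^+A_r\cap\Unp(A))=\Ha^{d-1}(\bd A_r\cap\Unp(A))$, the last step again using \eqref{eq:lem5.2-1} together with $\bd^+A_r\subseteq\Unp(A)$. On the other hand it is classical (e.g.\ \cite[Corollary~2.5]{RW10}) that at a differentiability point $V_A'(r)=S_A(r)=\Ha^{d-1}(\bd A_r)$. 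Comparing the two expressions for $V_A'(r)$ and using $\Ha^{d-1}(\bd A_r\cap\Unp(A))\le\Ha^{d-1}(\bd A_r)<\infty$ forces $\Ha^{d-1}(\bd A_r\setminus\Unp(A))=0$, which is (iv), hence (ii) and (iii).

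For the converse direction (iv)$\Rightarrow$(i): assuming $\Ha^{d-1}(\bd A_r\setminus\Unp(A))=0$, we have $\Ha^{d-1}(\bd^+A_r\cap\Unp(A))=\Ha^{d-1}(\bd A_r\cap\Unp(A))=\Ha^{d-1}(\bd A_r)$, using Lemma~\ref{lem:bd=pos-bd} and \eqref{eq:lem5.2-1} again. By Proposition~\ref{lem:pos-bd} (with $B=\R^d$), the left-hand side equals $(V_A)_+'(r)$, while $\Ha^{d-1}(\bd A_r)$ is the right-hand derivative $(V_A)_+'(r)$ computed from the Stach\'o/HLW formula at \emph{every} $r>0$ — so this does not by itself separate left and right derivatives. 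The cleanest route is instead to run the sandwich argument of Proposition~\ref{lem:pos-bd}'s proof directly: for any Borel $B$ one has $(V_{A,B})_+'(r)\le\Ha^{d-1}(\bd A_r\cap\piAB)$ and, summing over $B$ and $A\setminus B$ via the measure property of $\sM^{d-1}(\bd A_r,\pi^{-1}(\cdot))$ and \cite[Theorem~2.5]{Wi19}, $(V_{A,B})_-'(r)\ge\sM^{d-1}(\bd A_r,\piAB)=\Ha^{d-1}(\bd A_r\cap\piAB)$ whenever the complementary term is finite; under (iv) all the relevant quantities agree, forcing $(V_A)_+'(r)=(V_A)_-'(r)$. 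I expect this last implication — pinning down that (iv) actually yields \emph{equality} of one-sided derivatives rather than just the known inequality — to be the main obstacle, and the key is to exploit the additivity of the Minkowski-content set function from \cite[Lemma~2.11]{Wi19} in tandem with the reverse inequality $(V_{A,B})_+'(r)\le(V_{A,B})_-'(r)$ applied to a suitable splitting of $\bd A_r$ into its part in $\Unp(A)$ (which is $\Ha^{d-1}$-full by assumption) and its complement.
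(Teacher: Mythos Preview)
Your treatment of the equivalences (ii)$\Leftrightarrow$(iii)$\Leftrightarrow$(iv)$\Leftrightarrow$(v) is correct and essentially identical to the paper's. Your argument for (i)$\Rightarrow$(iv) is also fine (the paper does this slightly more directly via \cite[Corollary~2.5]{RW10}, which already asserts $V_A'(r)=\Ha^{d-1}(\bd^+A_r)=\Ha^{d-1}(\bd A_r)$ at differentiability points, but your route through Proposition~\ref{lem:pos-bd} works too).

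The genuine gap is in (iv)$\Rightarrow$(i). You correctly arrive at $(V_A)'_+(r)=\Ha^{d-1}(\bd^+A_r)=\Ha^{d-1}(\bd A_r)$, but then you assert that ``$\Ha^{d-1}(\bd A_r)$ is the right-hand derivative $(V_A)_+'(r)$ computed from the Stach\'o/HLW formula at \emph{every} $r>0$''. This is false, and it is exactly the point where the argument closes. What Stach\'o's theorem (see \cite[eq.~(2.1)]{RW10}) actually gives is
\[
\sM^{d-1}(\bd A_r)=\tfrac12\bigl((V_A)'_+(r)+(V_A)'_-(r)\bigr),
\]
and by rectifiability of $\bd A_r$ (\cite[Corollary~2.4]{RW10}) one has $\sM^{d-1}(\bd A_r)=\Ha^{d-1}(\bd A_r)$. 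So $\Ha^{d-1}(\bd A_r)$ is the \emph{average} of the one-sided derivatives, not the right derivative. Plugging in $(V_A)'_+(r)=\Ha^{d-1}(\bd A_r)$ immediately yields $(V_A)'_+(r)=(V_A)'_-(r)$, which is (i). This is precisely how the paper argues (in contrapositive form): if $(V_A)'_+(r)<(V_A)'_-(r)$, then $\Ha^{d-1}(\bd^+A_r)=(V_A)'_+(r)<\tfrac12((V_A)'_+(r)+(V_A)'_-(r))=\Ha^{d-1}(\bd A_r)$.

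Your proposed fallback ``sandwich argument'' does not work as stated either: the inequality $(V_{A,B})'_-(r)\ge\sM^{d-1}(\bd A_r,\piAB)$ that you would need is exactly what \cite[Theorem~2.5]{Wi19} provides \emph{only at differentiability points}, so invoking it here is circular. The missing ingredient is simply the Stach\'o average formula above.
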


\begin{rem}  \label{rem}
The equivalence of (i) and (v) implies that if $V_A$ is not differentiable at $r>0$ then $r$ must be a critical value of $d(\cdot,A)$.

The equivalence of (i) and (iv) has been obtained as a side result in \cite{HS22} using a slightly different argument and not only for the usual (Euclidean) parallel sets but also for parallel sets w.r.t.\ some other Minkowski norms, see \cite[Thm.~4.3]{HS22}. Also the other equivalences appear essentially in \cite{HS22}, see in particular \cite[Lemma 4.2 and eq.\ (60)]{HS22}. Moreover, one can deduce from \cite[Lemma 3.25]{HS22}, that $\Ha^{d-1}$-a.a.\ critical points  $x$ of $A$ have exactly two nearest points (located in opposite directions such that they lie on a segment containing $x$). Hence in (v), the set $\crit(A)$ can equivalently be replaced by the subset $\{x\in\crit(A): \card\Sigma_A(x)=2\}$. 
Yet another slightly different characterization of the differentiability of $V_A$ is obtained in \cite[Theorem 5.2]{CLV21}, see also \cite[Remark~4.4]{HS22} for the relation.
\end{rem}

\begin{proof}
  $(i) \Leftrightarrow (ii)$: If $V_A$ is differentiable at $r>0$, then, by \cite[Corollary~2.5]{RW10}, $V_A'(r)=\Ha^{d-1}(\bd^+ A_r)=\Ha^{d-1}(\bd A_r)$, which implies
$\Ha^{d-1}(\bd A_r\setminus \bd^+ A_r) =0$.

If $V_A$ is not differentiable at $r>0$, then one has $(V_A)'_+(r)<(V_A)'_-(r)$. Hence, by \cite[Corollary~4.6]{HLW04}, Stachó's Theorem (cf.\ e.g. \cite[eq.~(2.1)]{RW10}) and \cite[Corollary~2.4]{RW10}, we get
\begin{align*}
  \Ha^{d-1}(\bd^+ A_r)=(V_A)'_+(r)<\frac 12((V_A)'_+(r)+(V_A)'_-(r))=\sM^{d-1}(\bd A_r)=\Ha^{d-1}(\bd A_r).
\end{align*}
Since $\bd^+ A_r\subseteq \bd A_r$, this implies immediately $\Ha^{d-1}(\bd A_r\setminus \bd^+ A_r)>0$.

$(ii) \Leftrightarrow (iii)$: This is obvious from the disjoint decomposition $\bd A_r\setminus \bd^+ A_r=\bd A_r\setminus \bd^+ A_r\setminus \Unp(A)\cup\bd A_r\setminus \bd^+ A_r\cap\Unp(A)$ and the set inclusion $\partial^+ A_r\subset\Unp(A)$ (see Lemma~\ref{lem:bd=pos-bd}).

$(iii) \Leftrightarrow (iv)$: Follows from \eqref{eq:lem5.2-1}.

$(iv) \Leftrightarrow (v)$: Follows from \eqref{eq:lem5.2-2}.
\end{proof}

\section{Proof of the main result}  \label{S_proof}

In this section we provide a proof of Theorem~\ref{thm:main}. Recall that
a \emph{Kneser function} (of order $d$) is a continuous function $f:(0,\infty)\to(0,\infty)$ such that for all $\lambda\geq 1$,
\begin{align}
  \label{eq:def-kneser}
  f(\lambda b)-f(\lambda a)\leq\lambda^d(f(b)-f(a)),\quad 0<a\leq b.
\end{align}
Any Kneser function $f$ is differentiable at all points $r>0$ except countably many and that left and right derivatives exist at any point $r>0$ and satisfy $f'_-(r)\geq f'_+(r)$, see \cite{Stacho}. Moreover, $f'_-$ is left-continuous and $f'_+$ is right-continuous. These properties imply continuity of $f'_-$ and $f'_+$ at differentiability points of $f$:
\begin{lem} \label{lem:cont-f'}
  Let $f:(0,\infty)\to\R$ be a Kneser function and let $D\subset(0,\infty)$ be the set of differentiability points of $f$. Then for any $r_0\in D$,
  \begin{align} \label{eq:cont-f'}
    f'(r_0)=\lim_{r\to r_0} f'_+(r)=\lim_{r\to r_0} f'_-(r)=\lim_{{r\to r_0},{r\in D}} f'(r).
  \end{align}
\end{lem}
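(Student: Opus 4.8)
The plan is to exploit the one-sided monotone continuity properties of $f'_-$ and $f'_+$ that are already recorded just above the statement: namely that $f'_+$ is right-continuous, $f'_-$ is left-continuous, and $f'_-(r)\geq f'_+(r)$ at every $r>0$. The target is to show all four quantities in \eqref{eq:cont-f'} agree at a differentiability point $r_0$.

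First I would establish the two-sided inequalities at $r_0$. For a sequence $r\searrow r_0$, right-continuity of $f'_+$ gives $f'_+(r)\to f'_+(r_0)=f'(r_0)$; on the other hand $f'_-(r)\geq f'_+(r)$, so $\liminf_{r\searrow r_0} f'_-(r)\geq f'(r_0)$. For the reverse bound I would use the standard comparison of one-sided derivatives of a function with a difference quotient: since $f$ is differentiable at $r_0$, for any $\delta>0$ there is a neighbourhood on which the difference quotients of $f$ are within $\delta$ of $f'(r_0)$; combined with $f'_+(s)\le (f(t)-f(s))/(t-s)\le f'_-(t)$ for $s<t$ (a consequence of convexity-type monotonicity of one-sided derivatives of Kneser functions), one squeezes $\limsup_{r\searrow r_0} f'_-(r)\le f'(r_0)$ and $\liminf_{r\searrow r_0} f'_+(r)\ge f'(r_0)$. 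Hence $f'_\pm(r)\to f'(r_0)$ as $r\searrow r_0$. The argument as $r\nearrow r_0$ is symmetric, using left-continuity of $f'_-$ and the same difference-quotient sandwich.

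Next, combining the left and right limits yields $\lim_{r\to r_0}f'_+(r)=\lim_{r\to r_0}f'_-(r)=f'(r_0)$. Finally, for $r\in D$ one has $f'(r)=f'_+(r)=f'_-(r)$, so the restricted limit $\lim_{r\to r_0,\,r\in D}f'(r)$ is squeezed between $\lim f'_+(r)$ and $\lim f'_-(r)$ and therefore also equals $f'(r_0)$; this requires knowing $D$ is dense (indeed cofinite in every interval, since its complement is countable), which is exactly the cited property from \cite{Stacho}.

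The main obstacle, such as it is, is making precise the difference-quotient sandwich $f'_+(s)\le \frac{f(t)-f(s)}{t-s}\le f'_-(t)$ for $s<t$ and extracting from differentiability at $r_0$ the uniform control of these quotients on a one-sided neighbourhood; this is where one must be slightly careful rather than merely invoke right/left-continuity. Everything else is a routine $\varepsilon$-argument. I would present it compactly as follows.

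\begin{proof}
By \cite{Stacho}, $f'_-$ and $f'_+$ exist everywhere on $(0,\infty)$, $f'_-\geq f'_+$, $f'_+$ is right-continuous, $f'_-$ is left-continuous, and $D$ has countable complement; moreover, for $0<s<t$,
\begin{align} \label{eq:diffquot}
  f'_+(s)\leq\frac{f(t)-f(s)}{t-s}\leq f'_-(t).
\end{align}
Fix $r_0\in D$ and $\eps>0$. By differentiability of $f$ at $r_0$, there is $h>0$ with $(r_0-h,r_0+h)\subset(0,\infty)$ such that
\begin{align} \label{eq:loc-deriv}
  \Bigl|\frac{f(v)-f(u)}{v-u}-f'(r_0)\Bigr|<\eps\quad\text{whenever } r_0-h<u<v<r_0+h.
\end{align}
Let $r\in(r_0,r_0+h)$. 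Choosing $s=r$ and letting $t\downarrow r$ (using continuity of $f$) and $s\uparrow r$ (with $t=r$) in \eqref{eq:diffquot} while invoking \eqref{eq:loc-deriv}, we get $f'_+(r)\leq f'(r_0)+\eps$ and $f'_-(r)\geq f'(r_0)-\eps$. For the complementary bounds, pick $u,v$ with $r_0-h<u<r<v<r_0+h$; then by \eqref{eq:diffquot},
\begin{align*}
  f'_-(r)\leq\frac{f(v)-f(r)}{v-r}<f'(r_0)+\eps,\qquad
  f'_+(r)\geq\frac{f(r)-f(u)}{r-u}>f'(r_0)-\eps.
\end{align*}
Since $f'_+(r)\leq f'_-(r)$, this shows $|f'_\pm(r)-f'(r_0)|\leq\eps$ for all $r\in(r_0,r_0+h)$. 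The same reasoning on $(r_0-h,r_0)$ gives $|f'_\pm(r)-f'(r_0)|\leq\eps$ there as well. Hence
\begin{align*}
  \lim_{r\to r_0}f'_+(r)=\lim_{r\to r_0}f'_-(r)=f'(r_0).
\end{align*}
Finally, for $r\in D$ we have $f'(r)=f'_+(r)=f'_-(r)$, and since $D$ is dense in $(0,\infty)$ (its complement being countable), the restricted limit $\lim_{r\to r_0,\,r\in D}f'(r)$ exists and equals $f'(r_0)$ by the two displayed limits. This proves \eqref{eq:cont-f'}.
\end{proof}
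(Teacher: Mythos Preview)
Your proof rests on two ingredients that do not hold for Kneser functions in general. The sandwich inequality $f'_+(s)\leq\frac{f(t)-f(s)}{t-s}\leq f'_-(t)$ for $s<t$ is characteristic of \emph{convex} functions and is not in \cite{Stacho}. A concrete counterexample: for $A=\{0,1\}\subset\R$ the volume function is $V_A(r)=4r$ for $r<\tfrac12$ and $V_A(r)=1+2r$ for $r\ge\tfrac12$; taking $s=\tfrac14$, $t=\tfrac34$ gives $f'_+(s)=4$, $\frac{f(t)-f(s)}{t-s}=3$, $f'_-(t)=2$, so $4\leq 3\leq 2$ fails. (The reversed, concave-type sandwich is also false: take $A$ a single point in $\R^2$, where $V_A(r)=\pi r^2$.) Second, the ``strong differentiability'' estimate $\bigl|\frac{f(v)-f(u)}{v-u}-f'(r_0)\bigr|<\eps$ for \emph{arbitrary} $u<v$ in $(r_0-h,r_0+h)$ is not a consequence of differentiability at the single point $r_0$; differentiability only controls difference quotients with one endpoint equal to $r_0$, and you apply the estimate with both endpoints on the same side of $r_0$. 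Finally, even granting your sandwich as written, the ``complementary bounds'' step invokes $f'_-(r)\leq\frac{f(v)-f(r)}{v-r}$ and $f'_+(r)\geq\frac{f(r)-f(u)}{r-u}$, neither of which follows from it: your sandwich only yields an upper bound on $f'_+$ and a lower bound on $f'_-$, whereas here you need the reverse.

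The paper's argument sidesteps all of this by using only the four properties recorded just before the lemma. To show, e.g., that $f'_+$ is left-continuous at $r_0\in D$: left-continuity of $f'_-$ traps $f'_-(s)$ in $I:=[f'(r_0)-\eps,f'(r_0)+\eps]$ for $s\in(r_0-\delta,r_0)$; then right-continuity of $f'_+$ and density of $D$ give $f'_+(s)=\lim_{t\to s_+,\,t\in D}f'_+(t)=\lim_{t\to s_+,\,t\in D}f'_-(t)\in I$. No comparison with difference quotients is needed.
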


\begin{proof}
First note that the last equality is a consequence of the first two. We provide a proof of the first equality, the second one can be proved similarly. Let $r_0\in D$. Since $f'_+$ is well known to be right-continuous at any point, it suffices to show that $f'_+$ is left-continuous at $r_0$. By the left-continuity of $f'_-$, for any $\eps>0$ there exists $\delta>0$ such that $f'_-(s)\in I:=[f'(r_0)-\eps,f'(r_0)+\eps]$ whenever $r_0-\delta<s<r_0$.  For any such $s$ we have also (by the right-continuity of $f'_+$)
$$f'_+(s)=\lim_{t\to s_+}f'_+(t)=\lim_{t\to s_+,t\in D}f'_+(t)=\lim_{t\to s_+,t\in D}f'_-(t)\in I,$$
hence, also $f'_+$ is left-continuous at $r_0$.
\end{proof}

Let $N(A)$ denote the (generalized) normal bundle of $A$. (Recall that $N(A)$ consists of all pairs $(x,n)\in\partial A\times S^{d-1}$ such that $\pi_A(x+sn)=x$ for some $s>0$, see \cite[p.239]{HLW04}.) Let $\Pi_A:\Unp(A)\setminus A\to N(A)$ be defined by $x\mapsto (\pi_A(x), \frac{x-\pi_A(x)}{|x-\pi_A(x)|})$. Then for any subset $\beta\subset N(A)$, the sets $A_r\cap \Pi_A^{-1}(\beta)$ are again metrically associated with $A$ and hence, the function $r\mapsto V_{A,\beta}(r):=\lambda_d(A_r\cap\Pi_A^{-1}(\beta))$ is a Kneser function of order $d$. Observe that working with $\Pi_A$ instead of $\pi_A$ is a significant refinement, allowing e.g.\ to study one sided parallel sets of a curve in $\R^2$ or a surface in $\R^3$.

As indicated in \cite[Remark 2.1]{Wi19},  all results for preimages $\piAB$ of (measurable) sets $B$ in \cite{Wi19} extend to preimages $\Pi_A^{-1}(\beta)$ of (measurable) sets $\beta$ in the normal bundle. Note that also the set inclusion \eqref{eq:bd=pos-bd} holds with $\piAB$ replaced by $\Pi_A^{-1}(\beta)$, and similary as explained in Remark~\ref{rem:diff}, differentiability of $V_A=V_{A,A\times\S^{d-1}}$ at some $r>0$ implies the differentiability of $V_{A,\beta}$ at this $r$ for any measurable $\beta\subseteq A\times \S^{d-1}$.
For the convenience of the reader, we formulate here the corresponding generalization of (the relevant part of) Proposition~\ref{lem:pos-bd}. It will be used in the proof of the next lemma.
\begin{prop}\label{lem:pos-bd-PiA}
Let $A \subset\R^d$ be nonempty and compact and let $\beta\subset N(A)$ be some Borel set. Then, for any $r>0$,
\begin{align} \label{eq:pos-bd-PiA}
  (V_{A,\beta})_{+}'(r)=\Ha^{d-1}(\partial^+ A_r\cap\Pi^{-1}_A(\beta))\leq\Ha^{d-1}(\partial A_r\cap\Pi^{-1}_A(\beta)).
\end{align}
Moreover, if $V_A$ is differentiable at $r>0$, then `$\leq$' in \eqref{eq:pos-bd} can be replaced by `$=$'. %Similarly, let $r>0$ and let $K\supset A\cap B$ be some ball such that $\dist(\bd K, A\cap B)> 2r$. If $V_{A\cap K}$ is differentiable at $r$, then again equality holds in \eqref{eq:pos-bd}.
\end{prop}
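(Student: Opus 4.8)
The plan is to deduce Proposition~\ref{lem:pos-bd-PiA} from the already-corrected Proposition~\ref{lem:pos-bd} by the reduction announced in the text: replace the collection of metric-projection preimages $\piAB$ by the collection of normal-bundle preimages $\Pi_A^{-1}(\beta)$, and observe that every step in the proof of Proposition~\ref{lem:pos-bd} goes through verbatim for this finer partition.

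First I would establish the equality $(V_{A,\beta})'_+(r)=\Ha^{d-1}(\partial^+A_r\cap\Pi_A^{-1}(\beta))$. This is the direct analogue of the equality in \eqref{eq:pos-bd}, and (as stated) its proof in \cite[Prop.~2.10]{Wi19} is correct; the only point to check is that the argument there, which is really a statement about a set metrically associated with $A$ together with the positive-boundary characterization of the Minkowski content of its boundary, never used the specific structure $B\subset A$ but only that $A_r\cap\Pi_A^{-1}(\beta)$ is metrically associated with $A$ (which is recalled in the paragraph preceding the proposition) and that $\partial^+$ of this set equals $\partial^+A_r\cap\Pi_A^{-1}(\beta)$ up to lower-dimensional sets. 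I would invoke \cite[Remark~2.1]{Wi19} for the general principle that all of \cite{Wi19} carries over to $\Pi_A$-preimages, and spell out only the identity $\partial^+(A_r)\cap\Pi_A^{-1}(\beta)\subseteq\partial(A_r)\cap\Pi_A^{-1}(\beta)$, which (as noted in the text) is the $\Pi_A$-version of Lemma~\ref{lem:bd=pos-bd} and is again obvious because $\partial^+$ is by definition a subset of $\partial$. That yields the inequality in \eqref{eq:pos-bd-PiA} for free.

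For the equality case under differentiability of $V_A$ at $r$, I would copy the three-line balancing argument from the proof of Proposition~\ref{lem:pos-bd}. Namely: by the $\Pi_A$-analogue of Remark~\ref{rem:diff} (explicitly granted in the text), $V_{A,\beta}$ and $V_{A,\beta^c}$ are both differentiable at $r$, where $\beta^c:=N(A)\setminus\beta$; then $V'_{A,\beta}(r)\le\Ha^{d-1}(\partial A_r\cap\Pi_A^{-1}(\beta))$ and $V'_{A,\beta^c}(r)\le\Ha^{d-1}(\partial A_r\cap\Pi_A^{-1}(\beta^c))$ from the already-proven inequality; while the reverse total inequality
\begin{align*}
  \Ha^{d-1}(\partial A_r\cap\Pi_A^{-1}(\beta))+\Ha^{d-1}(\partial A_r\cap\Pi_A^{-1}(\beta^c))
  \le V'_{A,\beta}(r)+V'_{A,\beta^c}(r)
\end{align*}
follows exactly as in \eqref{eq:pos-bd-proof3}, using $(d-1)$-rectifiability of $\partial A_r$, the coincidence of $\Ha^{d-1}$ and $\sM^{d-1}$ on it, the measure property of $\beta\mapsto\sM^{d-1}(\partial A_r,\Pi_A^{-1}(\beta))$ (the $\Pi_A$-version of \cite[Lemma~2.11]{Wi19}), and the containment $(\partial A_r\cap\Unp(A))_t\subseteq(\partial A_r)_t\cap(\Unp(A))_t$ together with $\lambda_d(\exo(A))=0$. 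The elementary observation that $a'\le a$, $b'\le b$, $a+b\le a'+b'$ forces $a'=a$ then gives equality in \eqref{eq:pos-bd-PiA}.

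I expect no serious obstacle: the whole content is a transcription exercise. The one place where I would be careful is the bookkeeping at the ``edges'' of the normal bundle — one must use $\Unp(A)\setminus A$ rather than $\Unp(A)$ as the domain of $\Pi_A$, and check that $\Ha^{d-1}(\partial A_r\cap A)=0$ (true since $\partial A_r\subseteq\{d(\cdot,A)=r\}$ is disjoint from $A$ for $r>0$) so that replacing $\Unp(A)$ by its image under $\Pi_A$ in the rectifiability/measure decomposition loses nothing. Everything else is literally the proof of Proposition~\ref{lem:pos-bd} with $\piAB$ overwritten by $\Pi_A^{-1}(\beta)$, so in the write-up I would state the proposition, note it is the $\Pi_A$-analogue of Proposition~\ref{lem:pos-bd}, and remark that the proof is identical, indicating only the two substitutions above.
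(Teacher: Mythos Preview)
Your proposal is correct and matches the paper's approach exactly: the paper does not write out a separate proof of Proposition~\ref{lem:pos-bd-PiA} but simply presents it as the $\Pi_A$-analogue of Proposition~\ref{lem:pos-bd}, noting (in the paragraph preceding the statement) that \eqref{eq:bd=pos-bd} holds with $\piAB$ replaced by $\Pi_A^{-1}(\beta)$ and that the differentiability transfer of Remark~\ref{rem:diff} carries over. Your write-up is in fact more explicit than the paper's, including the careful check that $\partial A_r\cap A=\emptyset$ for $r>0$, which the paper leaves implicit.
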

Recall that $S_A(r,\cdot)$ denotes
%Write
%\begin{align*}
%  S_A(r,\cdot):=\H^{d-1}(\partial A_r\cap \cdot)
%\end{align*}
the parallel surface measure of $A$ at distance $r\geq 0$.
\begin{lem} \label{lem:conv-PiA}
  Let $A\subset\R^d$ be nonempty and compact and let $r_0>0$ be a differentiability point of $r\mapsto V_A(r)$. Then for any Borel set $\beta\subset N(A)$
  \begin{align*}
    \lim_{r\to r_0} S_A(r,\Pi_A^{-1}(\beta))=S_A(r_0,\Pi_A^{-1}(\beta)).
  \end{align*}
\end{lem}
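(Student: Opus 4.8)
The plan is to combine the derivative formula in Proposition~\ref{lem:pos-bd-PiA} with the continuity of one-sided derivatives of Kneser functions from Lemma~\ref{lem:cont-f'}. Write $f(r):=V_{A,\beta}(r)=\lambda_d(A_r\cap\Pi_A^{-1}(\beta))$; since $A_r\cap\Pi_A^{-1}(\beta)$ is metrically associated with $A$, the function $f$ is a Kneser function of order $d$. By Remark~\ref{rem:diff} (in the version for $\Pi_A$), differentiability of $V_A$ at $r_0$ forces differentiability of $f$ at $r_0$, so $r_0\in D$, the set of differentiability points of $f$. First I would record, via Proposition~\ref{lem:pos-bd-PiA}, that $f'_+(r)=\H^{d-1}(\partial^+A_r\cap\Pi_A^{-1}(\beta))$ for every $r>0$, and that at any differentiability point $r$ of $V_A$ one has $f'(r)=\H^{d-1}(\partial A_r\cap\Pi_A^{-1}(\beta))=S_A(r,\Pi_A^{-1}(\beta))$. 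In particular $f'(r_0)=S_A(r_0,\Pi_A^{-1}(\beta))$.

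Next I would split the limit $r\to r_0$ into the cases $r$ a differentiability point of $V_A$ and $r$ not. Since the non-differentiability points of $V_A$ form a countable set, they do not exhaust any one-sided punctured neighbourhood of $r_0$, but the limit must still be controlled along them; the clean way is to work with the Kneser function $f$ directly rather than with $V_A$. For each fixed $r>0$ we always have the sandwich $f'_+(r)\le S_A(r,\Pi_A^{-1}(\beta))\le f'_-(r)$: the left inequality is \eqref{eq:pos-bd-PiA} together with Lemma~\ref{lem:bd=pos-bd} (for $\Pi_A$), and the right inequality follows because $f'_-(r)$ equals the relative Minkowski content $\sM^{d-1}(\partial A_r,\Pi_A^{-1}(\beta))$ — this is the content of the corrected \cite[Theorem~2.5]{Wi19} / Theorem~\ref{thm:loc-diff-points} machinery extended to $\Pi_A$ — and the Hausdorff measure of a $(d-1)$-rectifiable set is dominated by its relative upper Minkowski content, exactly as in the display leading to \eqref{eq:pos-bd-proof3}. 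Actually, to be safe I would instead argue: $S_A(r,\Pi_A^{-1}(\beta))=\H^{d-1}(\partial A_r\cap\Pi_A^{-1}(\beta))\le\H^{d-1}(\partial A_r\cap\Pi_A^{-1}(\beta))+\H^{d-1}(\partial A_r\cap\Pi_A^{-1}(\beta^c))$, and apply \eqref{eq:pos-bd-PiA} to both $\beta$ and its complement together with the measure property of $B\mapsto\sM^{d-1}(\partial A_r,\pi^{-1}(B))$, mimicking the estimate \eqref{eq:pos-bd-proof3}; this yields $S_A(r,\Pi_A^{-1}(\beta))\le f'_-(r)$ for every $r>0$.

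With the two-sided bound $f'_+(r)\le S_A(r,\Pi_A^{-1}(\beta))\le f'_-(r)$ in hand, the conclusion is immediate: by Lemma~\ref{lem:cont-f'} applied to the Kneser function $f$ at the point $r_0\in D$,
\begin{align*}
  \lim_{r\to r_0}f'_+(r)=\lim_{r\to r_0}f'_-(r)=f'(r_0)=S_A(r_0,\Pi_A^{-1}(\beta)),
\end{align*}
so the squeezed quantity $S_A(r,\Pi_A^{-1}(\beta))$ converges to $S_A(r_0,\Pi_A^{-1}(\beta))$ as $r\to r_0$. I expect the main obstacle to be purely bookkeeping: verifying carefully that Proposition~\ref{lem:pos-bd-PiA}, the relative Minkowski content identity $f'_-(r)=\sM^{d-1}(\partial A_r,\Pi_A^{-1}(\beta))$, the measure property used in the complement argument, and Remark~\ref{rem:diff} all genuinely transfer from the $\pi_A^{-1}(B)$ setting to the $\Pi_A^{-1}(\beta)$ setting — which the paper has already asserted (following \cite[Remark~2.1]{Wi19}) but which one should double-check does not interact badly with the corrected Lemma~\ref{lem:bd=pos-bd}. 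The analytic heart of the argument, the one-sided-derivative squeeze plus Lemma~\ref{lem:cont-f'}, is then routine.
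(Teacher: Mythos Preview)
Your lower bound $f'_+(r)\le S_A(r,\Pi_A^{-1}(\beta))$ and the identification $f'(r_0)=S_A(r_0,\Pi_A^{-1}(\beta))$ are fine, and coincide with the paper's liminf step. The gap is the claimed pointwise upper bound $S_A(r,\Pi_A^{-1}(\beta))\le f'_-(r)$ for \emph{every} $r>0$. Your first justification, that $f'_-(r)=\sM^{d-1}(\partial A_r,\Pi_A^{-1}(\beta))$, is not available: the cited \cite[Theorem~2.5]{Wi19} gives this identity only at differentiability points, and at non-differentiability points one expects the relative Minkowski content to equal $\tfrac12(f'_+(r)+f'_-(r))$, not $f'_-(r)$. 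Your fallback complement argument does not close the gap either: mimicking \eqref{eq:pos-bd-proof3} yields at best
\[
S_A(r,\Pi_A^{-1}(\beta))+S_A(r,\Pi_A^{-1}(\beta^c))\le\tfrac12\bigl(f'_+(r)+f'_-(r)\bigr)+\tfrac12\bigl(g'_+(r)+g'_-(r)\bigr),
\]
with $g:=V_{A,\beta^c}$, and combining this with $S_A(r,\Pi_A^{-1}(\beta^c))\ge g'_+(r)$ only gives $S_A(r,\Pi_A^{-1}(\beta))\le \tfrac12(f'_+(r)+f'_-(r))+\tfrac12(g'_-(r)-g'_+(r))$, which can exceed $f'_-(r)$ whenever $g$ is not differentiable at $r$. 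So the squeeze fails precisely at the non-differentiability points of $V_A$ that you need it for.

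The paper avoids any pointwise upper bound altogether. It runs the complement argument \emph{at the limit level}: since the total mass $S_A(r,\Unp(A))\le \Ha^{d-1}(\partial A_r)=\tfrac12((V_A)'_+(r)+(V_A)'_-(r))$ converges to $S_A(r_0,\Unp(A))$ by Lemma~\ref{lem:cont-f'} applied to $V_A$, and since the liminf inequality holds for both $\beta$ and $\beta^c$, a short contradiction argument forces $\limsup_{r\to r_0} S_A(r,\Pi_A^{-1}(\beta))\le S_A(r_0,\Pi_A^{-1}(\beta))$. To repair your argument you should replace the unproved squeeze by this global-mass-plus-complement step.
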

\begin{proof}
Observe that $f:(0,\infty)\to\R, r\mapsto \lambda_d(A_r\cap \Pi_A^{-1}(\beta))$ is a Kneser function. Since $r_0$ is assumed to be a differentiability point of $V_A$, by Remark~\ref{rem:diff}, $f$ is also differentiable at $r_0$.
Applying Proposition~\ref{lem:pos-bd-PiA} and Lemma~\ref{lem:cont-f'}, we conclude
\begin{align} \label{eq:liminf-mur}
   \liminf_{r\to r_0} S_A(r,\Pi_A^{-1}(\beta))&=\liminf_{r\to r_0} \Ha^{d-1}(\bd A_r\cap\Pi_A^{-1}(\beta)) \\
   &\geq \liminf_{r\to r_0} \Ha^{d-1}(\bd^+ A_r\cap\Pi_A^{-1}(\beta))\notag \\
   &=\liminf_{r\to r_0} f'_+(r)=f'(r_0)=S_A(r_0,\Pi_A^{-1}(\beta)).\notag
\end{align}
To complete the proof, it remains to show
\begin{align} \label{eq:limsup-mur}
   s:=\limsup_{r\to r_0} S_A(r,\Pi_A^{-1}(\beta))&\leq S_A(r_0,\Pi_A^{-1}(\beta)).
\end{align}
Assume for a contradiction that $s>S_A(r_0,\Pi_A^{-1}(\beta))$. Let $(r_n)_{n\in\N}$ be some sequence such that $r_n\to r_0$ as $n\to\infty$ and
\begin{align*}
   \lim_{n\to\infty} S_A(r_n,\Pi_A^{-1}(\beta))&=\limsup_{r\to r_0} S_A(r,\Pi_A^{-1}(\beta)).
\end{align*}
We infer that
\begin{align*}
   S_A(r_0,\Unp(A))&=\lim_{n\to\infty} S_A(r_n,\Unp(A))\\
   &=\lim_{n\to\infty} S_A(r_n,\Pi_A^{-1}(\beta))+\lim_{n\to\infty} S_A(r_n,\Pi_A^{-1}(N(A)\setminus\beta))\\
   &=s+\lim_{n\to\infty} S_A(r_n,\Pi_A^{-1}(N(A)\setminus\beta))\\
   &>S_A(r_0,\Pi_A^{-1}(\beta))+\liminf_{r\to r_0} S_A(r,\Pi_A^{-1}(N(A)\setminus\beta))\\
   &\geq S_A(r_0,\Pi_A^{-1}(\beta))+ S_A(r_0,\Pi_A^{-1}(N(A)\setminus\beta))\\
   &=S_A(r_0,\Unp(A)),
\end{align*}
which is impossible. Hence \eqref{eq:limsup-mur} holds. Note that the second limit in the second line above exists, since the limit in the first line exists. For the last inequality we used that \eqref{eq:liminf-mur} holds with $\beta$ replaced by $N(A)\setminus\beta$.
\end{proof}

\begin{proof}
  [Proof of Theorem~\ref{thm:main}] %Recall that, for any $r>0$, we write $S_A(r,\cdot)=\H^{d-1}(\partial A_r\cap\cdot)$.
  Fix some $R>r_0$. First note that, since $A$ is compact, the family of measures $M:=\{S_A(r,\cdot):r\in(0,R]\}$ is tight. Therefore, by Prokhorov's theorem, any sequence of measures in $M$ has a converging subsequence. Let $(r_i)_{i\in\N}$ be some sequence (in $(0,R]$) such that $r_i\to r_0$ as $i\to\infty$ and such that \begin{align}
     \mu_i:=S_A(r_i,\cdot)\to \mu \text{ weakly, as } i\to\infty,
  \end{align}
   for some measure $\mu$. We will show below that necessarily $\mu=\mu_0:=S_{r_0}(A,\cdot)$. Since this implies in particular that any such converging sequence $(\mu_i)$ has the same weak limit, the claimed weak convergence  $S_{r}(A,\cdot)\to S_{r_0}(A,\cdot)$, as $r\to r_0$, follows.

  Clearly, the limit measure $\mu$ satisfies $\spt\mu=\partial A_{r_0}$ (as does $\mu_0$). In order to show $\mu=\mu_0$, it therefore suffices to prove that \begin{align}
     \label{eq:claim-closed-sets} \mu(F)=\mu_0(F)
  \end{align}
 for any (relatively) closed set $F\subset \bd A_{r_0}\cap\Unp(A)$. Indeed, since the closed sets form an intersection stable generator of the Borel $\sigma$-algebra, \eqref{eq:claim-closed-sets} implies $\mu(\cdot\cap\Unp(A))= \mu_0(\cdot\cap\Unp(A))$. Moreover, since $\mu_0(\Unp(A)^c)=0$ and the total masses of the measures $\mu_i$ converge to the total mass of $\mu_0$ ($\H^{d-1}(\bd A_{r_i})\to\H^{d-1}(\bd A_{r_0})$ as $i\to\infty$), we also have $\mu(\Unp(A)^c)=0$, and hence $\mu=\mu_0$.

    It remains to verify \eqref{eq:claim-closed-sets}. Let us first consider some relatively open set $G\subset \bd A_{r_0}\cap\Unp(A)$. Since the restriction $\Pi_A|_{\bd A_{r_0}\cap \Unp(A)}$ of $\Pi_A$ to the set $A_{r_0}\cap\Unp(A)$ has a Lipschitz continuous inverse (given by $(x,n)\mapsto x+r_0 n$), its image $\Pi_A(G)\subset N(A)$ is open, and since $\Pi_A$ is continuous, the preimage $\Pi_A^{-1}(\Pi_A(G))$ is open in $\Unp(A)\setminus A$. Therefore, by the Portmanteau theorem (see e.g.~\cite{Bil99}), the weak convergence $\mu_i\to\mu$ implies that
    $$
    \liminf_{i\to\infty} \mu_i(\Pi_A^{-1}(\Pi_A(G)))\geq \mu(\Pi_A^{-1}(\Pi_A(G)))=\mu(\Pi_A^{-1}(\Pi_A(G))\cap\bd A_{r_0})=\mu(G).
    $$
    Further, since $r_0$ is a differentiability point of $V_A$, by Lemma~\ref{lem:conv-PiA}, we also have
    $$
    \lim_{i\to\infty} \mu_i(\Pi_A^{-1}(\Pi_A(G)))= \mu_0(\Pi_A^{-1}(\Pi_A(G)))=\mu_0(\Pi_A^{-1}(\Pi_A(G))\cap\bd A_{r_0})=\mu_0(G).
    $$
    Therefore, we get $\mu(G)\leq \mu_0(G)$ for any $G$ relatively open in $\bd A_{r_0}\cap\Unp(A)$. With a similar argument, we obtain $\mu(F)\geq \mu_0(F)$ for any relatively closed set $F\subset \bd A_{r_0}\cap\Unp(A)$. (Recall that, by the Portmanteau theorem, $\mu_i\to\mu$ implies $\limsup_{i\to\infty} \mu_i(C)\leq \mu(C)$ for any closed set $C\subset\R^d$.) Moreover, since any Borel measure on a metric space is outer regular, we infer that
    \begin{align*}
      \mu(F)&=\inf\{\mu(G): G\supset F \text{ is relatively open in } A_{r_0}\cap\Unp(A)\}\\
      &\leq\inf\{\mu_0(G): G\supset F \text{ is relatively open in } A_{r_0}\cap\Unp(A)\}=\mu_0(F).
    \end{align*}
    Hence, we have shown $\mu(F)=\mu_0(F)$ any relatively closed set $F\subset \bd A_{r_0}\cap\Unp(A)$ as claimed in \eqref{eq:claim-closed-sets} and this completes the proof of Theorem~\ref{thm:main}.
\end{proof}

\section{Non-differentiability points of the parallel volume} \label{s_N}

For any compact set $A\subset\R^d$ we denote by $N_A\subset(0,\infty)$ the set of all radii $s>0$ such that $r\mapsto V_A(r):=\lambda_d(A_r)$ is not differentiable at $s$. It is well known that $N_A$ is at most countable (see \cite{Stacho}). Note that $N_A$ may be empty, e.g., if $A$ is convex. Moreover, for any compact set $A\subset\R^d$, the set $N_A$ is bounded. More precisely, $N_A\subset(0,\diam(A))$ (see \cite[p.~1038]{Fu85}).
Fixing the space dimension $d\in\N$, we address the following question: \emph{For which nonempty countable sets $N\subset (0,\infty)$  does there exist some compact $A\subset\R^d$ such that $N_A=N$?}

We start with the case $d=1$. For any compact subset $A\subset\R$, the complement of $A$ consists of at most countably many open intervals, two of them unbounded. Ignoring the latter, we denote by $\ell_1, \ell_2,\ldots$ the lengths of the bounded complementary intervals of $A$ in non-increasing order. The sequence $(\ell_j)_{j\in\N}$ is called the \emph{fractal string} associated with $A$. (Observe that %there are at most countably many intervals in the complement of $A$ and that
the lengths $\ell_j$ sum up to $\lambda_1(\conv(A)\setminus A)<\diam(A)<\infty$. Hence, for each $\delta>0$, there are at most finitely many elements in the sequence $(\ell_j)$ such that $\ell_j>\delta$.)
It is easy to see that $V_A$ is not differentiable at some $r>0$ if and only if $2r$ appears in the fractal string. Hence
$$
N_A=\{r\in(0,\infty): 2r=\ell_j \text{ for some } j\in\N\}.
$$
This makes clear that a countable set is possible as a set of non-differentiability points of a compact set $A\subset \R$ only if it is summable. It turns out that summability is not only necessary but also sufficient, as we stated in Theorem~\ref{thm:nondiff-points-dim1}.

\begin{proof}[Proof of Theorem~\ref{thm:nondiff-points-dim1}]
  Let $A\subset\R$ be compact. Let $(\ell_j)_{j\in\N}$ be the associated fractal string as defined above. Then clearly, $\sum_j \ell_j\leq \diam(A)<\infty$. Since
  $$
  N_A=\{r\in(0,\infty): 2r=\ell_j \text{ for some } j\in\N\}
  $$
  it follows that $\sum_{s\in N_A} s<\infty$ as claimed.

  To see the reverse implication, let $N\subset(0,\infty)$ be an arbitrary set such that $\sum_{s\in N} s<\infty$ holds. Denote by $s_1,s_2,\ldots$ the elements of $N$ in decreasing order. This may be a finite or an infinite sequence. In the finite case set $s_n:=0$ for $n>\card(N)$. Let $a_0:=0$ and $a_n:=\sum_{j=1}^n 2s_j$ for any $n\in\N$. Finally set $a_\infty:=\sum_{j=1}^\infty 2s_j$, which is finite due to the assumption $\sum_{s\in N} s<\infty$.
 Then the set
  $$
  A:=\{a_n: n\in\N_0\}\cup \{a_\infty\}
 $$
 is contained in the bounded interval $[0, a_\infty]$. Hence $A$ is bounded.
 Moreover, $A$ is either finite or it has a unique accumulation point at $a_\infty$. Hence $A$ is closed. Finally, we have $N_A=N$, since for each $s\in N$ we have a complementary interval of length $2s$ in $A$ causing a jump by $2$ at $s$ of the parallel surface area $r\mapsto \Ha^0(A_r)$ of $A$.
\end{proof}

The above one-dimensional example can be easily adapted to higher dimension, considering Cartesian products with some flat pieces. Choosing a better arrangement, one can even construct compact sets $A\subset\R^d$ with $N_A=N$ whenever $\sum_{s\in N}s^d<\infty$. We will describe such a construction below, see Proposition~\ref{prop:NC-dimd}. It shows that summability of the $d$-th powers is a sufficient condition for a set $N$ to be a set of non-differentiability points for a compact set in $\R^d$. However, for $d\geq 2$, this condition is not necessary. Using the methods of \cite{RZ20}, we derive a weaker condition which turns out to be necessary and sufficient in dimension $d=2$. We have formulated this characterization already in Theorem~\ref{thm:dim2} above, using the notion of \emph{gap sums} introduced in \cite{BT54}.
%Note that if $N\subset(0,\infty)$
Recall that, given a compact set $K\subset\R$ and some $\alpha>0$, the \emph{degree-$\alpha$ gap sum} of $K$ is
$$G_\alpha(K):=\sum_j\ell_j^\alpha,$$
where $(\ell_j)$ is the fractal string associated with $K$. (Note that $G_\alpha(K)$ agrees with the value of the geometric zeta function of the fractal string at $\alpha$, cf.\ \cite{La06}.)

We are now ready to prove Theorem~\ref{thm:dim2} which provides a characterization of all possible sets of non-differentiability points of the volume function for planar compact sets.

\begin{proof}[Proof of Theorem~\ref{thm:dim2}]
The necessity of the condition in (i) follows directly from \cite[Theorem~1.1]{RZ20}: if $N_A=N$ for some compact set $A\subset\R^2$, then $\overline{N}\subset\cv(A)$. (Here we use the relation $N_A\subset\cv(A)$, see Remark~\ref{rem}, and the fact that $\cv(A)$ is closed.) Hence, $\overline{N}$ must be Lebesgue null and $G_{1/2}(\overline{N})<\infty$. In the same way, the necessity of the condition in (ii) follows from \cite[Theorem~1.2]{RZ20}. We will show the sufficiency by adapting some constructions from \cite{RZ20}.

To prove the sufficiency in (i), consider a countable bounded set $N\subset[\ep,\infty)$ with $\lambda_1(\overline{N})=0$ and $G_{1/2}(\overline{N})<\infty$. By \cite[Proposition~3.9]{RZ20}, for $b:=\max\overline{N}$ and $g(s):=\sqrt{2b}\, G_{1/2}(\overline{N}\cap [\ep,s])$, the set
$$F:=\{(g(s),\pm s)\in\R^2:\, s\in \overline{N}\}$$
fulfills $\overline{N}\subset\cv(F)$. We shall modify the set $F$ by adding small horizontal segments on the places of points $(g(s),\pm s)$, $s\in N$, and obtain a compact set $A$ with $N= N_A$.

To this end, we choose positive numbers $\gamma_s>0$, $s\in N$, such that $\Gamma:=\sum_{s\in N}\gamma_s<\infty$, and we set $\gamma_s:=0$ for $s\in\overline{N}\setminus N$. We denote $\Gamma_s:=\sum_{t<s}\gamma_t$ and $J_s:=[g(s)+\Gamma_s,g(s)+\Gamma_s+\gamma_s]$, $s\in\overline{N}$.
Note that, due to the monotonicity of $g$, the family $\{J_s:\, s\in\overline{N}\}$ consists of pairwise disjoint compact intervals. The length of $J_s$ is $\gamma_s$, in particular, $J_s$ is degenerate if $s\in\overline{N}\setminus N$. Further,
$$\sup_{s\in\overline{N}}\max J_s=\sqrt{2b}\,G_{1/2}(\overline{N})+\Gamma.$$
We set
\begin{align*}
B:=&\bigcup_{s\in\overline{N}}\bigcup_{x\in J_s}B((x,0),s),\\
R:=&[-b,b+\sqrt{2b}\,\cG_{1/2}(\overline{N})+\Gamma]\times[-b,b]\quad\text{ and}\\
A:=&R\setminus\INt B.
\end{align*}
Note that $R$ is a rectangle containing $B$, and $\INt B$ (a union of open balls with centres on the $x$-axis) thus forms the bounded part of the complement of $A$. Due to the symmetry, the only critical points of $A$ can lie on the $x$-axis.

Let $x\in J_s$ for some $s\in\overline{N}$. Since $\INt B((x,0),s)\cap A=\emptyset$, we have $\dist((x,0),A)\geq s$. We shall show that both, $(x,-s)$ and $(x,s)$, belong to $A$, which will imply that $(x,0)\in\crit(A)$ and $d((x,0),A)=s$. It suffices to show that $\|(x',0)-(x,\pm s)\|\geq s'$ whenever $s'\neq s$ and $x'\in J_{s'}$. Indeed, this will imply that the points $(x,\pm s)$ are not contained in the ball $B((x',0),s')$. If $x'<x$, then, by definition, $s'<s$ and the inequality is obvious. If $x'>x$, we have
$$
s'-s=\sum_{I\in\cG(\overline{N}\cap[s,s'])}|I|\leq\left(\sum_{I\in\cG(\overline{N}\cap[s,s'])}\sqrt{|I|}\right)^2=(2b)^{-1}(g(s')-g(s))^2.
$$
Here we used the notation $\cG(K)$ for the set of all maximal bounded open intervals of the complement of a compact set $K\subset\R$. Thus,
$$s'^2-s^2=(s'+s)(s'-s)\leq 2b(s'-s)\leq (g(s')-g(s))^2\leq (x'-x)^2,$$
which implies that
$$\|(x,\pm s)-(x',0)\|^2=(x-x')^2+s^2\geq s'^2.$$
It remains to consider possible critical points $(x,0)$ for $x\not\in \bigcup_sJ_s$. If $x\not\in\conv(\bigcup_sJ_s)$, then $(x,0)$ cannot be critical for $A$. If $x\in\conv(\bigcup_sJ_s)$, then $x$ has nearest left and right neighbours in $\bigcup_sJ_s$, i.e., points $x_1<x<x_2$, $x_1\in J_{s_1}$, $x_2\in J_{s_2}$ for some $s_1<s_2$, $s_1,s_2\in\overline{N}$ such that the open interval $(x_1,x_2)$ is disjoint from $\bigcup_sJ_s$. Consider the intersection $\partial B((x_1,0),s_1)\cap \partial B((x_2,0),s_2)$. If it consists of two points $(x',\pm y')$, then these are the only closest points from $(x,0)$ to $A$, and hence $(x,0)$ is critical only if $x=x'$. This is an isolated critical point of $A$, hence it cannot contribute to $N_A$. If the intersection $\partial B((x_1,0),s_1)\cap \partial B((x_2,0),s_2)$ is empty or a singleton, then there is some $x'\in\R$ such that $(x',0)$ is the unique nearest point from $(x,0)$ to $A$. Hence $x$ is not critical for $A$.

Now we will prove the sufficiency of \eqref{int_bound} in (ii); we will use an analogous procedure as in the proof of \cite[Proposition~3.16]{RZ20}.
For this purpose, let $N\subset(0,\infty)$ be a bounded, countable set satisfying $\lambda_1(\overline{N})=0$ and condition \eqref{int_bound}.
As before, let $b:=\max\overline{N}$ and denote $\delta_n:=b2^{-n}$, $n\in\N$. Applying \cite[Lemma~3.15]{RZ20}, we obtain for each $n\in\N$ a decomposition
$$K_n:=\overline{N}\cap[\delta_{n+1},\delta_n]=K_{n,1}\cup\dots\cup K_{n,p_n}$$
with compact sets $K_{n,i}$ such that $p_n\leq \delta_n^{-1/2}G_{1/2}(K_n)+1$ and
$$G_{1/2}(K_{n,i})\leq 2\delta_n^{1/2},\quad i=1,\dots,p_n.$$
Applying the construction from the first part of the proof to each of the sets $K_{n,i}\cap N$, we obtain compact sets $A_{n,i}\subset\R^2$ contained in rectangles $R_{n,i}$ such that
$K_{n,i}\cap N= N_{A_{n,i}}$
and
$$R_{n,i}= [-\delta_n,\delta_n+\sqrt{2\delta_n}\,G_{1/2}(K_{n,i})+\Gamma_{n,i}]\times[-\delta_n,\delta_n],
$$
for some parameters $\Gamma_{n,i}>0$, which we can choose freely. Choosing $\Gamma_{n,i}:=(2-\sqrt{2})\sqrt{\delta_n}\,G_{1/2}(K_{n,i})$, we obtain
\begin{align*}
\sum_{n=1}^\infty\sum_{i=1}^{p_n}\lambda_2(R_{n,i})&=\sum_{n=1}^\infty\sum_{i=1}^{p_n} 4(\delta_n^2+\delta_n^{3/2}G_{1/2}(K_{n,i}))\\
&\leq 12\sum_{n=1}^\infty\delta_n^2p_n\\
&\leq 12\sum_{n=1}^\infty\delta_n^2(\delta_n^{-1/2}G_{1/2}(K_n)+1)\\
&= 12\sum_{n=1}^\infty\delta_n^{3/2}G_{1/2}(K_n)+12\sum_{n=1}^\infty\delta_n^2.
\end{align*}
The last sum is clearly finite. We further use that \eqref{int_bound} implies the finiteness of the first sum, that is, $\sum_{n=1}^\infty\delta_n^{3/2}G_{1/2}(K_n)<\infty$. (In fact, these two conditions are equivalent, see \cite[p.~315]{RZ20}.) Thus, the sum of the areas of all the rectangles $R_{n,i}$ is finite. Since the diameters of the sets $R_{n,i}$ are uniformly bounded, according to the solution of the ``\emph{potato-sack problem}'' (see \cite{ABMU81}), we can find a \emph{bounded packing} of the sets $R_{n,i}$, i.e., there exist shifts $a_{n,i}\in\R^2$ such that $R_{n,i}+a_{n,i}$ are pairwise disjoint and $\bigcup_{n=1}^\infty\bigcup_{i=1}^{p_n}(R_{n,i}+a_{n,i})\subset B(0,R)$ for some $R>0$. We define
$$A:=\bigcup_{n=1}^\infty\bigcup_{i=1}^{p_n}(A_{n,i}+a_{n,i})\cup \left(B(0,R)\setminus\bigcup_{n=1}^\infty\bigcup_{i=1}^{p_n} (R_{n,i}+a_{n,i})\right).$$
It is easy to see that $A$ is a compact set and its complement has the following (disjoint) connected components: the complement of $B(0,R)$ and the bounded connected components of $(A_{n,i}+a_{n,i})^C$, $i=1,\dots,p_n$, $n\in\N$. Thus,
$$N_A=\bigcup_{n=1}^\infty\bigcup_{i=1}^{p_n}N_{A_{n,i}}=N,$$
completing the proof of part (ii) of Theorem~\ref{thm:dim2}.
\end{proof}

\begin{rem}  \label{rem_s^2}
If $N=\{b_i:\, i=1,2\dots\}$ for a monotone sequence $b_1>b_2>\ldots$ such that $\sum_ib_i^2<\infty$, then condition \eqref{int_bound} in Theorem~\ref{thm:dim2} is satisfied for $N$. Indeed, \eqref{int_bound} can be rewritten in this case as
$$I:=\int_0^{b_2}\sum_{j:\, b_{j+1}\geq r}\sqrt{b_j-b_{j+1}}\, \sqrt{r}\, dr<\infty,$$
and since
\begin{align*}
I&\leq\sum_{i=1}^\infty (b_{i+1}-b_{i+2})\sum_{j=1}^i\sqrt{b_j-b_{j+1}}\, \sqrt{b_i}\\
&=\sum_{j=1}^\infty \sqrt{b_j-b_{j+1}}\sum_{i=j}^\infty \sqrt{b_i}(b_{i+1}-b_{i+2})\\
&\leq\sum_{j=1}^\infty \sqrt{b_j-b_{j+1}}\sqrt{b_j}\sum_{i=j}^\infty (b_{i+1}-b_{i+2})\\
&\leq\sum_{j=1}^\infty \sqrt{b_j-b_{j+1}}\sqrt{b_j}\,b_{j+1},
\end{align*}
the H\"older inequality implies
$$I\leq\sqrt{\sum_{j=1}^\infty(b_j-b_{j+1})b_j}\sqrt{\sum_{j=2}^\infty b_j^2}<\infty,$$
showing that the latter condition is satisfied. Hence square-summability of an infinite set $N$ implies condition \eqref{int_bound} and $\lambda_1(\overline{N})=0$. (And clearly any finite set $N$ satisfies the conditions $G_{1/2}(\overline{N})<\infty$ and $\lambda_1(\overline{N})=0$ in part (i) of Theorem~\ref{thm:dim2}.) But the conditions in Theorem~\ref{thm:dim2} are strictly weaker. Indeed, the gap-sum condition of Theorem~\ref{thm:dim2}~(i) is translation invariant, hence, the set $N$ can have an accumulation point greater than $0$, implying $\sum_{s\in N}s^2=\infty$.
\end{rem}

We provide some examples of valid sets of non-differentiability points (of the volume function of some compact subset of $\R^2$) showing that such sets may have a rich structure. In particular, they may have any Minkowski dimension less than $4/5$.
\begin{ex}
  \label{ex:cantor} For $q\in(0,1/2)$ let $F_q\subset\R$ be the self-similar set generated by the two mappings $x\mapsto qx$ and $x\mapsto qx +(1-q)$. (The set can be obtained iteratively from $[0,1]$ by removing in the $n$-th step ($n\in\N$) from each of the intervals remaining from the previous step (each of length $q^{n-1}$) a centred open interval of length $(1-2q)q^{n-1}$.) It is well known that $F_q$ has Minkowski dimension $\dim_M F_q={\log 2}/{\log (1/q)}<1$.
  Note that $[0,1]\setminus F_q$ has countably many connected components (open intervals). Let $E_q$ be the set of all endpoints of these intervals. Observe that among these intervals there are exactly $2^{k}$  of length $(1-2q)q^k$ for each $k\in\N_0$. Note also that $\overline{E}_q=F_q$. Hence, for any $\alpha>0$,
  $$
  G_\alpha(\overline{E}_q)=G_\alpha(F_q)=\sum_{k=0}^\infty 2^k (1-2q)^\alpha q^{k\alpha}=(1-2q)^\alpha \sum_{k=0}^\infty (2 q^\alpha)^k.
  $$
  For $\alpha=1/2$, the gap sum will be finite if and only if $q\in(0,1/4)$.
  Now define for some $\eps>0$ and $q\in(0,1/4)$ the set $N_q:=\eps+E_q$. Then, clearly, $N_q$ satisfies the conditions of Theorem~\ref{thm:dim2}(i) and therefore it is a valid set non-differentiability points.
  According to part (ii), also the set $E_q$ itself is valid for each $q\in(0,1/4)$, since condition \eqref{int_bound} is satisfied for $N=E_q$. This shows that sets of non-differentiability points may have any Minkowski dimension between $0$ and $1/2$.

  In contrast, the sets $E_q$ for $q\in[1/4,1/2)$ cannot occur as sets of non-differentiabi\-lity points, since due to the self-similarity one has in this case, for any $r<1-q$,
  $$
  G_{1/2}(F_q\cap [r,\infty))\geq G_{1/2}(F_q\cap [1-q,\infty))=\sqrt{q}G_{1/2}(F_q)=\infty.
  $$
  Thus condition \eqref{int_bound} is not satisfied.

  However, as indicated after Theorem~\ref{thm:dim2}, it is possible for a valid set $N$ of non-differentiability points to have a Minkowski dimension larger than $1/2$. But then the Minkowski dimension of $N$ must be determined by its behaviour near $0$ while away from $0$ it should not exceed $1/2$ in the following sense: for each $\eps>0$ the set $N\cap[\eps,\infty)$ must satisfy $G_{1/2}(N\cap[\eps,\infty))<\infty$ implying in particular $\dim_M(N\cap[\eps,\infty))\leq\frac 12$. We also provide examples of such sets. For $q\in[1/4,1/2)$, we rearrange the intervals of $[0,1]\setminus F_q$ in the following way. We place them side by side in $[0,1]$ with no space between them in non-increasing order of their lengths starting with the largest interval of length $(1-2q)$ placed such that 1 is one of its endpoints. The resulting set of endpoints is $E_q'=\{1,2q,2q^2+q,4q^2,\ldots\}$. It clearly satisfies $\overline{E_q'}=E_q'\cup\{0\}$, i.e., it only accumulates at $0$. Thus $E_q'\cap[r,\infty)$ is finite and therefore $G_{1/2}(\overline{E}_q'\cap[r,\infty))<\infty$ for each $r>0$. More precisely, we have for $q\in(1/4,1/2)$ and $r\in[(2q)^n,(2q)^{n-1})$, $n\in\N$,
  \begin{align*}
     G_{1/2}(\overline{E}_q'\cap[r,\infty))&\leq G_{1/2}(\overline{E}_q'\cap[(2q)^n,\infty))=\sum_{k=0}^{n-1} 2^k ((1-2q)q^{k})^{1/2}\\
     &=(1-2q)^{1/2}\sum_{k=0}^{n-1} (2q^{1/2})^k=\frac{\sqrt{1-2q}}{2\sqrt{q}-1}((2\sqrt{q})^n-1)\leq c_q (2\sqrt{q})^n,
  \end{align*}
  where $c_q:=\frac{\sqrt{1-2q}}{2\sqrt{q}-1}>0$, since $q> 1/4$. This yields
  \begin{align*}
    \int_0^\infty G_{1/2}(\overline{E_q'}\cap [r,\infty))\, \sqrt{r}\, dr
    &=\sum_{n=1}^\infty \int_{(2q)^n}^{(2q)^{n-1}} G_{1/2}(\overline{E_q'}\cap [r,\infty))\, \sqrt{r}\, dr\\
    &\leq\sum_{n=1}^\infty \int_{(2q)^n}^{(2q)^{n-1}} G_{1/2}(\overline{E_q'}\cap [(2q)^n,\infty))\, \sqrt{(2q)^{n-1}}\, dr\\
    &\leq\sum_{n=1}^\infty c_q (2\sqrt{q})^n(\sqrt{2q})^{n-1} \int_{(2q)^n}^{(2q)^{n-1}} dr\\
    &\leq c_q \frac{1-2q}{\sqrt{2q} 2q}\sum_{n=1}^\infty (4\sqrt{2}q^2)^{n}.
  \end{align*}
  The last sum is finite if and only if $q<(2\sqrt[4]{2})^{-1}$. Therefore, condition \eqref{int_bound} in Theorem~\ref{thm:dim2} is satisfied for $N=E_q'$ for each $q\in(1/4,1/(2\sqrt[4]{2}))$, showing that $E_q'$ is a valid set of non-differentiability points for these $q$. (A similar but slightly different computation shows that the same is true for the case $q=1/4$.) Observe that the choice $q=(2\sqrt[4]{2})^{-1}$ yields a set $E_q'$ of Minkowski dimension $\dim_M E_q'=4/5$. Hence a set $N_A$ of non-differentiability points may have any Minkowski dimension less than $4/5$. Finally, in \cite[Theorem~3.17]{RZ20}, a compact set $F\subset\R$ is described, which satisfies \eqref{int_bound} and $\dim_M F=4/5$. Hence, Minkowski dimension $4/5$ is also possible for a set of non-differentiability points.
\end{ex}

In dimension $d>2$ we do not know a characterization of non-differentiability points of the volume function. We only provide a sufficient condition which is analogous to the summability in Theorem~\ref{thm:nondiff-points-dim1} and Remark~\ref{rem_s^2}:

\begin{prop} \label{prop:NC-dimd}
  Let $d\in\N$ and $N\subset(0,\infty)$ be such that $\sum_{s\in N} s^d<\infty$.
  %nonempty, bounded and countable.
%  Assume that there exists some compact set $A\subset \R$ such that $N_A=N$.
  Then, there exists some compact set $C\subset\R^{d}$ such that $N_C=N$.
\end{prop}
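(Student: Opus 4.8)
The plan is to build the set $C$ explicitly as a disjoint union of small ``dumbbell''-like pieces, one for each element $s\in N$, packed into a bounded region of $\R^d$ so that the pieces do not interfere with one another. For a single radius $s>0$, the one-dimensional construction in the proof of Theorem~\ref{thm:nondiff-points-dim1} produces a complementary gap of length $2s$ whose midpoint is a non-differentiability point at $s$; the higher-dimensional analogue is obtained by taking a solid slab $[-s,s]^{d-1}\times[-s,s]$ with a round ``bubble'' removed, i.e.\ a cube $R_s:=[-s,2s]\times[-s,s]^{d-1}$ of diameter $O(s)$, with $\INt B((0,\ldots,0),s)$ removed (placed so the ball fits inside $R_s$ and touches the boundary in a controlled way); the centre of the removed ball is then a critical point of $C\cap R_s$ at distance $s$ and $V_{C\cap R_s}$ is non-differentiable exactly at $s$. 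The key structural input is that, by Remark~\ref{rem} (equivalence of (i) and (v) in Theorem~\ref{Cor_3.1}), non-differentiability of $V_C$ at $r$ is governed by the $\Ha^{d-1}$-measure of $\partial C_r\cap\crit(C)$, and critical points of the union are confined to the individual pieces provided the pieces are far enough apart.

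The steps I would carry out, in order, are: (1) Fix the single-radius gadget $A_s\subset R_s$ as above and verify that $N_{A_s}=\{s\}$ — this is essentially the planar computation in the proof of Theorem~\ref{thm:dim2}(i) specialized to one interval, with the ball replaced by a $d$-dimensional ball; the relevant fact is that the hemisphere of $\partial B$ facing into $A_s$ has positive $\Ha^{d-1}$-measure and consists of critical points of $A_s$ at level $s$, so by Theorem~\ref{Cor_3.1} $V_{A_s}$ is non-differentiable at $s$, and at every other radius $r\neq s$ the boundary $\partial (A_s)_r$ is $\Ha^{d-1}$-a.e.\ contained in $\Unp(A_s)$ so $V_{A_s}$ is differentiable there. (2) Enumerate $N=\{s_1,s_2,\ldots\}$ (finite or infinite) and note $\sum_i \lambda_d(R_{s_i})=\sum_i 3^{d-1}\cdot 3\cdot s_i^d\cdot(\text{const})\le \const\sum_{s\in N}s^d<\infty$ while $\diam R_{s_i}=O(s_i)$ is uniformly bounded (since $N$ is bounded, as it must be contained in $(0,\diam(A))$ for any candidate $A$, but here we only need $\sup N<\infty$, which follows from summability of $d$-th powers together with... actually not — so one should observe that $\sum s_i^d<\infty$ forces $s_i\to 0$ hence $\sup_i\diam R_{s_i}<\infty$). (3) Apply the solution of the ``potato-sack problem'' \cite{ABMU81}, exactly as in the proof of Theorem~\ref{thm:dim2}(ii), to obtain translations $a_i\in\R^d$ with the boxes $R_{s_i}+a_i$ pairwise disjoint and all contained in some ball $B(0,\rho)$. (4) Define $C:=\bigcup_i (A_{s_i}+a_i)\cup\bigl(B(0,\rho)\setminus\bigcup_i(R_{s_i}+a_i)\bigr)$, check that $C$ is compact (the closure issue is the only subtlety: since $s_i\to 0$, the pieces shrink and accumulate only at their limit points, which are already in the ``filler'' part $B(0,\rho)\setminus\bigcup_i(R_{s_i}+a_i)$, a closed set) and that its bounded complementary components are precisely the bounded complementary components of the individual $A_{s_i}+a_i$. (5) Conclude $N_C=\bigcup_i N_{A_{s_i}}=N$ via Theorem~\ref{Cor_3.1}: a critical point of $C$ at level $r$ lies either in the interior of one box (where $C$ agrees with $A_{s_i}+a_i$ up to radius $\sim s_i$, so critical points are those of $A_{s_i}$) or in the filler region, which is convex-ish away from the holes and contributes no critical points; hence $\partial C_r\cap\crit(C)$ has positive $\Ha^{d-1}$-measure iff $r=s_i$ for some $i$.

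The main obstacle I expect is step (5), specifically controlling the critical points of the union $C$ that arise ``between'' the filler region $B(0,\rho)\setminus\bigcup_i(R_{s_i}+a_i)$ and the holes inside the boxes — one must ensure that removing the balls $B(\cdot,s_i)$ from the big ball does not create spurious critical values other than the $s_i$, and that the critical points of $A_{s_i}+a_i$ at level $s_i$ remain critical for the full set $C$ (i.e.\ that no nearer point of $C$ lies outside the box). This is handled by the geometry: because $\dist(R_{s_i}+a_i,\ R_{s_j}+a_j)>0$ and we may, if necessary, inflate the separation by a uniform factor times $\sup_i s_i$ before invoking the packing lemma (the potato-sack argument still applies to the slightly enlarged boxes, since $\sum\lambda_d(\text{enlarged }R_{s_i})$ is still a finite constant multiple of $\sum s_i^d$), so that within distance $2s_i$ of $A_{s_i}+a_i$ the set $C$ coincides with $(A_{s_i}+a_i)\cup(\text{half-space-like filler touching }R_{s_i}+a_i)$, and the latter addition is locally convex and thus contributes no critical points at level $\le s_i$. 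A careful but routine local analysis, entirely parallel to the case distinction in the proof of Theorem~\ref{thm:dim2}(i) (points in $\bigcup J_s$, points in the convex hull but outside, points outside the convex hull), then finishes the argument.
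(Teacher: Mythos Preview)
Your overall framework---construct a gadget for each $s\in N$ with non-differentiability set exactly $\{s\}$, pack these gadgets into a large ball using the potato-sack result \cite{ABMU81} (their volumes are $O(s^d)$ and sum finitely by hypothesis), and fill the remainder---is precisely the paper's strategy, and your steps (2)--(5) would go through with a correct gadget.

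The genuine gap is in step (1): your gadget $A_s=R_s\setminus\INt B(0,s)$ does \emph{not} satisfy $N_{A_s}=\{s\}$; in fact $N_{A_s}=\emptyset$ for $d\ge 2$. By Theorem~\ref{Cor_3.1}, non-differentiability of $V_{A_s}$ at $s$ requires $\Ha^{d-1}\bigl(\partial(A_s)_s\cap\crit(A_s)\bigr)>0$, i.e.\ a $(d-1)$-dimensional set of critical points of $d(\cdot,A_s)$ lying at distance exactly $s$ from $A_s$. But the only such critical point is the centre $0$: every other $x$ in the removed open ball has the unique nearest point $s\,x/|x|\in\partial B(0,s)\subset A_s$, while points outside the convex box $R_s$ have a unique nearest point on $\partial R_s\subset A_s$. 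Hence $\crit(A_s)\cap\{d(\cdot,A_s)=s\}=\{0\}$, which is $\Ha^{d-1}$-null. Your assertion that ``the hemisphere of $\partial B$ \ldots\ consists of critical points of $A_s$ at level $s$'' conflates points of $\partial A_s$ (which are at distance $0$ from $A_s$) with critical points of the distance function (which must lie at distance $s$); the hemisphere is part of $\partial A_s$, not of $\partial(A_s)_s\cap\crit(A_s)$.

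The paper fixes this by choosing a gadget whose critical set at level $s$ is genuinely $(d-1)$-dimensional: it takes the \emph{boundary} $\partial D_s$ of a box $D_s:=[0,3s]^{d-1}\times[0,2s]$. The midplane $[s,2s]^{d-1}\times\{s\}$ between the two faces at mutual distance $2s$ is a $(d-1)$-cube of critical points at level $s$, so $N_{\partial D_s}=\{s\}$. One then sets $A:=B(0,R)\setminus\bigcup_{s\in N}\INt(a_s+D_s)$ (with the $a_s$ from the packing); since the bounded complementary components of $A$ are exactly the open boxes $\INt(a_s+D_s)$, the critical-point analysis is immediate and $N_A=N$. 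An equally valid repair of your construction would be to remove, instead of a round ball, a flat slab of thickness $2s$ (e.g.\ $[-s,s]^{d-1}\times(-s,s)$) from a slightly larger cube, so that the central $(d-1)$-disk becomes critical.
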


\begin{proof}
To prove this statement, we will explicitly construct a compact set $A\subset\R^d$ with $N_A=N$.
For any $s\in N$, consider the box $D_s\subset\R^d$ and its boundary $R_s$ defined by
$$D_s:= [0,3s]\times\dots\times[0,3s]\times[0,2s]\quad \text{and} \quad R_s:=\partial D_s,
$$
respectively. It is easy to see that $s$ is the only non-differentiability point of the volume function of $R_s$. Indeed, the set of critical values of $R_s$ is the $(d-1)$-dimensional cube $C_s$ lying parallel in the middle between the largest facets of $R_s$:
$$C_s=[s,2s]\times\dots\times[s,2s]\times\{s\}.$$
Note that the sum of volumes of $D_s$ converges by our assumption, and their diameters are uniformly bounded. Hence, using again the solution of the potato-sack problem (see \cite{ABMU81}), there exists a bounded packing of the boxes $D_s$, i.e., there exist points $a_s\in\R^d$ such that $(a_s+D_s)\cap (a_t+D_t)=\emptyset$, $s\neq t$, $s,t\in N$, and $\bigcup_{s\in N}(a_s+D_s)\subset B(0,R)$ for some $R$. Thus, the set
$$A:=B(0,R)\setminus\bigcup_{s\in N}\INt(a_s+D_s)$$
fulfills $N_A=N$.
\end{proof}

\end{document}